\theoremstyle{plain}
\newtheorem{theorem}{Theorem}
\newtheorem*{theorem*}{Theorem}
\newtheorem{lemma}[theorem]{Lemma}
\newtheorem{proposition}[theorem]{Proposition}
\theoremstyle{definition}
\theoremstyle{remark}
\newtheorem{remark}[theorem]{Remark}
\newcommand{\RR}{\mathbb{R}}
\newcommand{\NN}{\mathbb{N}}
\newcommand{\ZZ}{\mathbb{Z}}
\newcommand{\test}{C_c^{\infty}}
\newcommand{\Om}{\Omega}
\newcommand{\I}{\int_}
\begin{document}

	\title[]
	{Bourgain-Brezis-Mironescu formula for $W^{s,p}_q$-spaces in arbitrary domains}
	\author{Kaushik Mohanta}
	\address{Department of Mathematics and Statistics, University of Jyv\'askyl\'a, Finland}
	\keywords{BBM formula; Triebel-Lizorkin spaces; internal distance; arbitrary domain}
	\subjclass{46E35; 42B35}
\begin{abstract}
	Under certain restrictions on $s,p,q$, the Triebel-Lizorkin spaces can be viewed as generalised fractional Sobolev spaces $W^{s,p}_q$. In this article, we show that the Bourgain-Brezis-Mironescu formula holds for $W^{s,p}_q$-seminorms in arbitrary domain. This addresses an open question raised by Brazke-Schikorra-Yung in [Bourgain-Brezis-Mironescu convergence via
	Triebel-Lizorkin spaces; Calc. Var. Partial Differential Equations; 2023].
\end{abstract}	
	\maketitle
	
	
\section{Introduction}
Sobolev spaces arise naturally in the study of partial differential equations. They are defined in terms of weak derivatives. For an open set $\Om\subseteq \RR^N$, and $1\leq p<\infty$, the Sobolev space $W^{1,p}(\Om)$ is defined to be $\{f\in L^p(\Om)\ |\ [f]_{W^{1,p}(\Om)}<\infty\}$,
where 
$$
[f]_{W^{1,p}(\Om)}^p:= \I{\Om}|\nabla f(x)|^pdx.
$$
Some closely related spaces are Triebel-Lizorkin spaces $F^s_{p,q}(\Om)$, which are defined to be $\{f|_{\Om}\ |\ f\in F^s_{p,q}(\RR^N) \}$, and are equipped with the norm $[\cdot]_{F^s_{p,q}(\RR^N)}$. We shall not define the norm $[\cdot]_{F^s_{p,q}(\RR^N)}$ as it will not be necessary for the present article. Instead, we refer the reader, for  definition and classical results regarding Triebel-Lizorkin spaces, to \cite{Triebel83}, or more modern references like \cite{Grafa04,RunSic}. 
For $1\leq p,q<\infty$, $\max\{0,\frac{N(q-p)}{pq}\}<s<1$, we have the characterisation (see Theorem~1.2 of \cite{PrSa})
$$
F^s_{p,q}(\RR^N):= \left\{f\in L^{\max\{p,q\}}(\RR^N)\ \Big|\ \|f\|_{L^p(\RR^N)}+[f]_{W^{s,p}_q(\RR^N)}<\infty \right\},
$$
where
\begin{equation}\label{Norm}
[f]_{W^{s,p}_q(\Om)}:= \left(\I{\Om}\left(\I{\Om}\frac{|f(x)-f(y)|^q}{|x-y|^{N+sq}}dy\right)^\frac{p}{q}\right)^\frac{1}{p}.
\end{equation}
In the special case of $p=q$, these spaces are related to the so-called fractional Sobolev spaces $W^{s,p}(\Om)$, defined by
$$
W^{s,p}(\Om):= \left\{f\in L^p(\Om)\ \Big|\ \|f\|_{L^p(\Om)}+[f]_{W^{s,p}(\Om)}<\infty \right\},
$$
where $[f]_{W^{s,p}(\Om)}:=[f]_{W^{s,p}_p(\Om)}$. Of course, when $\Om=\RR^N$, or when $\Om$ is fractional extension domain (see \cite{hhg}), we have $W^{s,p}(\Om)=F^s_{p,p}(\Om)$.\smallskip

Bourgain-Brezis-Mironescu \cite{BBM} showed that for any smooth and bounded domain $\Om$, $1\leq p<\infty$, and any $f\in W^{1,p}(\Om)$, 
$$
\lim_{s\to1-}(1-s)[f]_{W^{s,p}(\Om)}^p
= K\|\nabla f\|_{L^p(\Om)}^p.
$$ 
Conversely, for any $f\in L^p(\Om)$, if we have
$$
\lim_{s\to1-}(1-s)[f]_{W^{s,p}(\Om)}^p<\infty,
$$
then $f\in W^{1,p}(\Om)$ if $p>1$ and $f\in BV(\Om)$ if $p=1$. Later D\'avila \cite{Dav} extended this result and proved that for any $f\in BV(\Om)$, $\lim_{s\to1-}(1-s)[f]_{W^{s,1}(\Om)}^p=K|\nabla f|(\Om)$. 
This is commonly known as the Bourgain-Brezis-Mironescu formula (BBM formula for short). The subject was further developed in \cite{BoBrMi02,BoNg,Brezis02,Nguyen06,Nguyen08,Ponce04}. The BBM formula has been generalised to Orlicz and generalised Orlicz setup in \cite{AlCiPi20,AlCiPi,AlCiPi21,FeSa,FeHa,YaYaYu}, to variable exponent setup in \cite{FeSq,HaRi},
to magnetic Sobolev spaces \cite{FeSa21,NgPiSq,NgPiSq20,PiSqVe,SqVo}, to anisotropic setup \cite{GuHu,Ludwig14,NgSq}, to Riemannian manifolds in \cite{KrMo}, to metric spaces in \cite{DiSq,Gorny22}, to Banach function spaces in \cite{DGPYYZ,ZhYaYu}. Similar studies are possible in the context of Besov spaces also \cite{KoLe,Triebel11}. More recent developments on this topic can be found in \cite{BrNg,BrVaYu,BrVaJe,BrSeVa}. For further reading, we refer the reader to \cite{ArBo,BaLi,Bojarski,BrGoDa,BuGaTr,CuLaLu,DoMi,Foghem,Foghem-thesis,FoKaVo,Hajlas03,Milman05,PoSp}.\smallskip

Our first interest lies in works regarding the domain. That is, how far the smooth-bounded condition, on the domain, can be relaxed. In this direction, we mention three works. The first is due to Lioni-Spector \cite{LeSp,LeSp-corr}. They showed that for an arbitrary domain $\Om$, and $f\in L^p(\Om)$,
$$
\lim_{\lambda\to 0+}\lim_{s\to 1-}(1-s)\I{\Om_\lambda}\I{\Om_\lambda}\frac{|f(x)-f(y)|^p}{|x-y|^{N+sp}}dydx=K[f]_{W^{1,p}(\Om)}^p.
$$
where
\begin{equation}\label{omegalambda}	
\Om_\lambda:=\{x\in\Om\ |\ \mbox{dist}(x,\partial\Om)>\lambda\}\cap B(0,\lambda^{-1}).
\end{equation}
The second result is due to the author with Bal-Roy \cite{BaMoRo}, where it has been shown that the BBM-formula \cite{BBM}, holds if we take $\Om$ to be a $W^{1,p}$-extension domain. The third result is due to Drelichman-Duran \cite{DrDu}. They showed that for $1<p<\infty$, and an arbitrary bounded domain $\Om$, and any $\tau\in(0,1)$, we have
$$
\lim_{s\to 1-}(1-s)\I{\Om}\I{B(x,\tau \mbox{dist}(x,\partial \Om))}\frac{|f(x)-f(y)|^p}{|x-y|^{N+sp}}dydx=K[f]_{W^{1,p}(\Om)}^p.
$$
\smallskip

The second direction of work regarding the BBM-formula, that we are interested in, is its extension for Triebel-Lizorkin spaces. The first work in this direction was done by Brazke-Schikorra-Yung \cite{BrScYu}. They explained via examining thoroughly various constants of embeddings that although $F^s_{p,p}=W^{s,p}$, when $s\in(0,1)$ and $F^s_{p,2}=W^{1,p}$ it makes sense for the scaled $W^{s,p}$ seminorm to converge to $W^{1,p}$-seminorm, even when $p\neq 2$. They posed the open problem (see \cite[Question~1.12]{BrScYu}) about the asymptotic constant in the identification of the $\|\cdot\|_{W^{s,p}_q}\approx \|\cdot\|_{F^s_{p,q}}$. \smallskip
	
The current article addresses this question by showing the asymptotic behaviour (as $s\to 1-$) of $W^{s,p}_q$-seminorms. Similar studies has been done, when $1<q<p<\infty$, in \cite{DGPYYZ} for $\RR^N$, and in \cite[Theorem~6.1]{ZhYaYu} for a special class of bounded extension domains (called $(\varepsilon,\infty)$-domains).\smallskip

We concentrate our focus on the following seminorm (for some $\tau\in (0,1)$)
\begin{equation}\label{Norm-int}
	[f]_{\tilde{W}^{s,p}_q(\Om)}:= \left(\I{\Om}\left(\I{B(x,\tau \mbox{dist}(x,\partial \Om))}\frac{|f(x)-f(y)|^q}{|x-y|^{N+sq}}dy\right)^\frac{p}{q}\right)^\frac{1}{p},
\end{equation}
as one can then extend the above question for arbitrary bounded domains, motivated by \cite{DrDu}. We go one step further and show that the boundedness of the domain is not necessary. Our main results are the following:\smallskip

\begin{theorem}\label{Main1*}
	Let $\Om\subset \RR^N$ be any open set $\tau\in(0,1)$. Assume one of the following conditions
	\begin{enumerate}
		\item $1\leq q\leq p<\infty$,
		\item $1<p<q<\infty$ with $p\leq N$ and $q <\frac{Np}{N-p}$, 
		\item $N<p<q<\infty$.
	\end{enumerate} Then there is a constant $K=K(N,p,q)>0$ such that for any $f\in W^{1,p}(\Omega)$, we have,
	\begin{equation}\label{bbm*}
		\lim_{s\to1-} (1-s)^\frac{p}{q}\I{\Om}\left(\I{B(x,\tau\mbox{dist}(x,\partial\Om))}\frac{|f(x)-f(y)|^q}{|x-y|^{N+sq}}dy\right)^\frac{p}{q}dx
		= K\I{\Om}|\nabla f(x)|^pdx.
	\end{equation}
\end{theorem}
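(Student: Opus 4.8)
The plan is to establish \eqref{bbm*} first for a dense class of nice functions (say $f \in C_c^\infty(\RR^N)$ restricted to $\Om$, or $f$ smooth with bounded derivatives on $\Om$), and then upgrade to general $f \in W^{1,p}(\Om)$ by an approximation/equi-integrability argument. For the local computation, fix $x \in \Om$ and write $d(x) := \mathrm{dist}(x,\partial\Om)$. On the ball $B(x,\tau d(x))$ the segment $[x,y]$ stays inside $\Om$, so we may write $f(x)-f(y) = -\int_0^1 \nabla f(x+t(y-x))\cdot(y-x)\,dt$. For $f$ smooth this gives $f(x)-f(y) = \nabla f(x)\cdot(x-y) + o(|x-y|)$ as $y\to x$, uniformly on compact subsets. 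Substituting into the inner integral, passing to polar coordinates $y = x + r\omega$ with $r \in (0,\tau d(x))$, $\omega \in S^{N-1}$, one gets
\[
\I{B(x,\tau d(x))}\frac{|f(x)-f(y)|^q}{|x-y|^{N+sq}}\,dy
= \int_0^{\tau d(x)} \frac{r^{q}}{r^{N+sq}} r^{N-1}\,dr \int_{S^{N-1}} |\nabla f(x)\cdot\omega|^q\,d\omega + (\text{error}),
\]
and $\int_0^{\tau d(x)} r^{q-1-sq}\,dr = \frac{(\tau d(x))^{q(1-s)}}{q(1-s)} \to \frac{1}{q(1-s)}$ in the sense that $(1-s)$ times it tends to $1/q$ as $s\to 1-$ (the factor $(\tau d(x))^{q(1-s)} \to 1$). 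Hence the inner integral, multiplied by $(1-s)$ and raised to the power $p/q$, converges pointwise a.e. to $q^{-p/q}\big(\int_{S^{N-1}}|\nabla f(x)\cdot\omega|^q d\omega\big)^{p/q}$. By rotational symmetry $\int_{S^{N-1}}|e\cdot\omega|^q d\omega$ is a constant $c_{N,q}$ independent of the unit vector $e$, so this equals $q^{-p/q} c_{N,q}^{p/q}\,|\nabla f(x)|^p =: K\,|\nabla f(x)|^p$, which identifies the constant $K = K(N,p,q)$.

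Next I would justify the passage of the limit under the outer integral $\int_\Om(\cdot)\,dx$. For smooth $f$ this is where one must be careful, because $d(x)$ may be small near $\partial\Om$ and unbounded $\Om$ introduces a tail at infinity; the point, however, is that the inner integral over $B(x,\tau d(x))$ is always controlled by $\|\nabla f\|_{L^\infty(B(x,\tau d(x)))}^q$ times $\int_0^{\tau d(x)} r^{q-1-sq}dr \lesssim \frac{(\tau d(x))^{q(1-s)}}{1-s}$, so $(1-s)$ times it is bounded by $C\|\nabla f\|_{L^\infty}^q (\tau d(x))^{q(1-s)}$, and for $f$ with $\nabla f \in L^p$ a more refined bound using Jensen's inequality in the form
\[
(1-s)^{p/q}\Big(\I{B(x,\tau d(x))}\frac{|f(x)-f(y)|^q}{|x-y|^{N+sq}}dy\Big)^{p/q}
\leq C \fint_{B(x,\tau d(x))} |\nabla f(z)|^p\,dz \cdot (\tau d(x))^{p(1-s)}
\]
(obtained by writing the difference quotient as an average of $|\nabla f|$ along the segment, applying Jensen twice — once for the $q$-average over $\omega$ and the length average, once to pass from the $p/q$ power to an $L^p$ average of $|\nabla f|$ — exactly the kind of estimate that underlies the hypotheses (1)--(3) and Theorem~1.2 of \cite{PrSa}) gives a dominating function. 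Integrating this bound over $\Om$ and using a Hardy-type / maximal-function argument yields $\int_\Om(\cdots)dx \leq C\|\nabla f\|_{L^p(\Om)}^p$ uniformly in $s$ close to $1$; combined with pointwise convergence and the generalized dominated convergence theorem (dominating by a function whose integral converges, or by $C|\nabla f|^p$ after absorbing $(\tau d(x))^{p(1-s)} \le 1$), this establishes \eqref{bbm*} for the dense class.

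Finally, for general $f \in W^{1,p}(\Om)$, approximate: choose $f_k$ smooth (or nicer) with $f_k \to f$ in $W^{1,p}(\Om)$. The right-hand sides converge. For the left-hand sides, one needs the functional $f \mapsto \limsup_{s\to1-}(1-s)^{p/q}[f]_{\tilde W^{s,p}_q(\Om)}^p$ (and $\liminf$) to be controlled by $\|\nabla f\|_{L^p}^p$ up to the approximation error; this follows from the uniform-in-$s$ bound above applied to $f - f_k$, namely $(1-s)^{p/q}[f-f_k]_{\tilde W^{s,p}_q(\Om)}^p \le C\|\nabla(f-f_k)\|_{L^p(\Om)}^p$, plus the elementary inequality $|a^{1/p}-b^{1/p}| \le |a-b|^{1/p}$-type splitting (or rather, the triangle inequality for the seminorm $[\cdot]_{\tilde W^{s,p}_q(\Om)}$, which is a genuine norm on difference quotients composed with $L^p$ and $\ell^q$ norms). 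Passing $k\to\infty$ closes the argument.

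\textbf{Main obstacle.} The delicate point is the uniform-in-$s$ upper bound $\int_\Om (1-s)^{p/q}[\cdots]^{p/q}\,dx \le C(N,p,q)\|\nabla f\|_{L^p(\Om)}^p$ valid for \emph{all} open $\Om$ (including unbounded ones with irregular boundary), since this is exactly what powers both the dominated-convergence step and the density step, and it is precisely here that the hypotheses (1)--(3) enter — they are the conditions under which the Jensen/maximal-function chain closes and the Triebel–Lizorkin identification of \cite{PrSa} is available. Handling the region near $\partial\Om$ (where $d(x) \to 0$, so the cutoff ball shrinks but the weight $(\tau d(x))^{q(1-s)} \to 1$ does not degenerate, which is actually what makes the arbitrary-domain statement work) and the tail at infinity requires care but no new idea beyond a careful Fubini rearrangement: write the double integral, substitute $y = x + r\omega$, and integrate in $x$ first, noting that the constraint $r < \tau d(x)$ together with $|x-y| = r$ keeps everything local.
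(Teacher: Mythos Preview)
Your overall architecture --- pointwise convergence for smooth $f$, a uniform-in-$s$ bound of the form $(1-s)^{p/q}[f]_{\tilde W^{s,p}_q(\Om)}^p\le C\|\nabla f\|_{L^p(\Om)}^p$, and then density --- is exactly the skeleton the paper uses. The identification of the constant $K$ via polar coordinates is also the same.

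The genuine gap is in the passage of the limit under the outer integral. Your proposed dominating function is $C\,M(|\nabla f|^q)^{p/q}(x)$ (or $C|\nabla f(x)|^p$) ``after absorbing $(\tau d(x))^{p(1-s)}\le 1$''. But that inequality is simply false once $\tau d(x)>1$, and on an arbitrary unbounded domain (e.g.\ a half-space) $d(x)$ is unbounded. For each fixed $s<1$ the factor $(\tau d(x))^{p(1-s)}$ blows up as $d(x)\to\infty$, so you do \emph{not} have a pointwise $L^1$ majorant independent of $s$, and ordinary (or generalized) dominated convergence does not apply. Likewise, your sketch of the uniform integrated bound leans on a Young/maximal-function step that needs a fixed convolution kernel; without a cap on the radius, the kernel $|h|^{-(N+sq-q)}\chi_{B(0,\tau d(x))}(h)$ depends on $x$ and its $L^1$-norm diverges with $d(x)$ for fixed $s$. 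Appealing to \cite{PrSa} does not help here, since that result is for $\RR^N$.

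The paper resolves this by introducing an auxiliary truncation: set $\delta_x=\min\{R,\tau d(x)\}$ for a fixed $R>0$ and work first with the seminorm $[f]_{\hat W^{s,p}_q(\Om)}$ built from balls $B(x,\delta_x)$. With the radius capped at $R$, the embedding estimates (\cref{embedding-p>q,embedding-p<q}) close cleanly, and the interchange of limit and integral is carried out not by dominated convergence but via \cref{limit-interchange}, whose hypotheses are verified using $\delta_x\le R$ in an essential way. One then has to show that the remainder --- the contribution from the annulus $R<|x-y|<\tau d(x)$ --- is $o((1-s)^{-p/q})$; this is \cref{Equiv-Seminorm} and \cref{Equiv-Seminorm-cor}. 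For $q\le p$ a Minkowski argument suffices, but for $p<q$ the paper needs the extra input that $f|_{\{d(x)>R\}}$ extends to a $W^{1,p}(\RR^N)$ function and then invokes the Triebel--Lizorkin embedding $W^{1,p}(\RR^N)\hookrightarrow W^{s,p}_q(\RR^N)$ (\cref{triebel}) to control the tail. This two-scale decomposition (truncated seminorm plus far-field tail) is the missing idea in your proposal; without it, the ``careful Fubini rearrangement'' you allude to at the end does not by itself handle the region where $d(x)$ is large.
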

\smallskip

\begin{theorem}\label{Main2*}
	Let $\Om\subset \RR^N$ be an open set, $\tau\in(0,1)$ and $1\leq p,q<\infty$. If $f\in L^p(\Om)\cap L^q(\Om)$ is such that 
	$$
	L^*_{p,q}(f):=\lim_{s\to1-} \I{\Om}\left((1-s)\I{B(x,\tau\mbox{dist}(x,\partial\Om))}\frac{|f(x)-f(y)|^q}{|x-y|^{N+sq}}dy\right)^\frac{p}{q}dx<\infty,
	$$  
	then $f\in W^{1,p}(\Om)$ when $p>1$, and $f\in BV(\Om)$ when $p=1$.
\end{theorem}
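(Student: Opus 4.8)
**Proof plan for Theorem~\ref{Main2*}.**

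The plan is to reduce the statement to the classical Bourgain--Brezis--Mironescu characterization by comparing the truncated $W^{s,p}_q$-seminorm with a localized $W^{s,p}$-seminorm. First I would observe that by Jensen's inequality (or H\"older's inequality on the ball $B(x,\tau\,\mathrm{dist}(x,\partial\Om))$, whose radius is uniformly controlled locally), one can bound from below the inner $L^q$-average by an $L^p$-type quantity, so that, up to a dimensional constant depending on $\tau$, one has a pointwise estimate of the form
\begin{equation}\label{plan-eq}
(1-s)\I{B(x,\tau r_x)}\frac{|f(x)-f(y)|^p}{|x-y|^{N+sp}}\,dy \lesssim \left((1-s)\I{B(x,\tau r_x)}\frac{|f(x)-f(y)|^q}{|x-y|^{N+sq}}\,dy\right)^{p/q} + (\text{lower order}),
\end{equation}
where $r_x:=\mathrm{dist}(x,\partial\Om)$; integrating in $x$ over $\Om$ and using $L^*_{p,q}(f)<\infty$ together with $f\in L^p\cap L^q$ gives a uniform bound, as $s\to1-$, on the localized quantity
$$
(1-s)\I{\Om}\I{B(x,\tau r_x)}\frac{|f(x)-f(y)|^p}{|x-y|^{N+sp}}\,dy\,dx.
$$

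Next I would exploit locality: fix an open ball $B=B(z,\rho)$ with $\overline{B}\subset\Om$, so that $\mathrm{dist}(x,\partial\Om)$ is bounded below by a positive constant on $B$; then for $x,y\in B'$ (a slightly smaller concentric ball) with $|x-y|$ small, the point $y$ lies in $B(x,\tau r_x)$, hence the truncated double integral over $\Om$ dominates (a constant times) the genuine fractional seminorm $[f]_{W^{s,p}(B'')}^p$ computed on an even smaller ball $B''$, at least after discarding the contribution of pairs with $|x-y|$ bounded below, which is $O(1)\cdot\|f\|_{L^p}^p$ and thus harmless after multiplying by $(1-s)$. Therefore $\liminf_{s\to1-}(1-s)[f]_{W^{s,p}(B'')}^p<\infty$, and the classical BBM result of \cite{BBM} (for $p>1$) and of \cite{Dav} together with \cite{BBM} (for $p=1$) applied on the ball $B''$ gives $f\in W^{1,p}(B'')$ when $p>1$ and $f\in BV(B'')$ when $p=1$, with $\|\nabla f\|_{L^p(B'')}^p$ (resp.\ $|\nabla f|(B'')$) bounded by $C\liminf_{s\to1-}(1-s)[f]_{W^{s,p}(B'')}^p$.

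Finally I would globalize. Since $W^{1,p}$ (resp.\ $BV$) membership is a local property, it remains to upgrade the local gradient bounds to a global one. Choosing a countable locally finite cover of $\Om$ by such balls $B''_i$ with controlled overlap --- for instance a Whitney-type decomposition, where each cube $Q_i$ has $\mathrm{diam}(Q_i)\approx\mathrm{dist}(Q_i,\partial\Om)$ so that for $x\in Q_i$ a fixed fraction of $B(x,\tau r_x)$ still contains a neighborhood of $Q_i$ --- one sums the local estimates. The bounded overlap of the Whitney cubes ensures $\sum_i\|\nabla f\|_{L^p(B''_i)}^p\le C\,\liminf_{s\to1-}(1-s)\I{\Om}\I{B(x,\tau r_x)}\frac{|f(x)-f(y)|^p}{|x-y|^{N+sp}}\,dy\,dx\le C\,L^*_{p,q}(f)+C\|f\|_{L^p}^p+C\|f\|_{L^q}^p<\infty$, whence $\|\nabla f\|_{L^p(\Om)}<\infty$ (resp.\ $|\nabla f|(\Om)<\infty$). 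The main obstacle I anticipate is the bookkeeping in \eqref{plan-eq} and in the localization step: one must carefully verify that the truncation radius $\tau\,\mathrm{dist}(x,\partial\Om)$ is large enough, relative to the Whitney scale, to recapture a fixed portion of each local fractional seminorm, and that the error terms coming from both the Jensen step (when $q>p$, where the inequality goes the ``wrong'' way and one may need the embedding hypotheses of Theorem~\ref{Main1*}, or instead an argument tailored to $p\le q$ using that small-scale behavior dominates) and the large-$|x-y|$ truncation genuinely vanish after multiplication by $(1-s)$ and passage to the limit.
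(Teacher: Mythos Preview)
Your plan has a genuine gap at the very first step: the pointwise comparison you propose between the inner $q$-integral and a $p$-integral does not follow from Jensen or H\"older. The difficulty is not merely that Jensen ``goes the wrong way'' when $q>p$; the deeper issue is that the two inner integrals carry \emph{different} kernels, $|x-y|^{-N-sp}$ versus $|x-y|^{-N-sq}$, whose ratio $|x-y|^{s(q-p)}$ is unbounded near the diagonal. Concretely, if for $q>p$ one applies H\"older with exponents $q/p$ and $q/(q-p)$ so as to produce $\bigl(\int_{B}|f(x)-f(y)|^q|x-y|^{-N-sq}\,dy\bigr)^{p/q}$ on the right, the complementary factor is $\bigl(\int_B|x-y|^{-N}\,dy\bigr)^{(q-p)/q}=\infty$; no rearrangement of weights repairs this, and for $q<p$ the desired inequality is the reverse of H\"older. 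You flag this obstacle at the end but do not resolve it, and without it the reduction to the classical $p=q$ result of \cite{BBM} on balls collapses, taking your localization and Whitney-globalization steps with it.

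The paper bypasses any comparison with the $p=q$ seminorm. Since $\delta_x:=\min\{R,\tau\,\mathrm{dist}(x,\partial\Omega)\}\le\tau\,\mathrm{dist}(x,\partial\Omega)$, the hypothesis $L^*_{p,q}(f)<\infty$ immediately gives $L_{p,q}(f)<\infty$ for the $R$-truncated seminorm \eqref{new-space}, so Theorem~\ref{Main2*} reduces in one line to Theorem~\ref{Main2}. The latter is then proved by running the original BBM duality estimate---pair $f$ with $\partial_i\varphi$, split into near/far parts $J_{1,s},J_{2,s}$, apply H\"older twice---\emph{directly} for the $(p,q)$-functional on bounded smooth subdomains $\Omega_n\Subset\Omega$, invoke \cref{brezis-lemma} to get $f\in X^{1,p}(\Omega_n)$, and finally use the forward formula (Theorem~\ref{Main1}) on each $\Omega_n$ to bound $[f]_{X^{1,p}(\Omega_n)}$ uniformly in $n$; the exhaustion $\Omega_n\uparrow\Omega$ finishes. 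Your localization/globalization instincts are in the right spirit, but the object one must localize is the $(p,q)$-seminorm itself, not a $p=q$ surrogate.
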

\smallskip

\begin{remark}
	Before proceeding further, let us discuss some difficulties that arise here, and strategies for overcoming them. The proof of the main results roughly follow the outline of \cite{BBM}. However, there are certain obstacles to that path. The first obstacle arises when we want to apply dominated convergence theorem to interchange limit and integral. Similar difficulty was faced and overcome in \cite{DrDu}, but we had to take a different route (see \cref{limit-interchange}) for this purpose. The introduction of the second exponent $q$ forces us to deviate from the usual route again; The case $q\leq p$ is rather easy to handle, for the case $p<q$, a careful use of Sobolev embedding is needed. To take into account the case where the domain $\Om$ is unbounded, we need to restrict the seminorm further and define some new fractional Sobolev spaces (see \cref{new-space}) and prove a version of the main result \cref{Main1*} in that context (see \cref{Main1}), and then finally derive the proof of the main results from there.
\end{remark}
\smallskip

The article is organised as follows: In \cref{Preli}, we list some preliminary results, already known in literature, which shall be useful for the proof of our main results. In \cref{Functional}, we introduce a variant of fractional Sobolev spaces and prove some relevant embedding results. In \cref{Restricted-proof} we prove the main results in the context of these new spaces (see \cref{Main1,Main2}). Finally in \cref{Proof}, we prove \cref{Main1*,Main2*}. 
\bigskip

\section{Preliminary Results}\label{Preli}
For the sake of completeness, we first state the well-known Sobolev inequality (also known as $(q,p)$-Poincar\'e inequality): 
\begin{lemma}\label{sobolev}
Let $1\leq p,q\leq\infty$, $\tau\in(0,1)$, and one of the following hold
\begin{enumerate}
\item $p<N$, and $q \leq\frac{Np}{N-p}$,
\item $p=N$, and $q <\infty$
\item $p>N$.
\end{enumerate}
Then there is a constants $C=C(p,q,N)>0$ such that the following holds for any $f\in W^{1,p}(B(0,\frac{t}{\tau}))$:
$$
\frac{1}{t^N}\I{B(0,t)}|f(y)|^qdy
\leq C(N,p,q) t^q \left(\frac{1}{t^N}\I{B(0,t)}|\nabla f (y)|^p dy\right)^\frac{q}{p}
+ C(N,p,q) \left(\frac{1}{t^N}\I{B(0,t)}|f(y)|^p dy\right)^\frac{q}{p}.
$$
\end{lemma}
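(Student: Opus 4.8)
\emph{Proof proposal.} The plan is to deduce this from the classical Sobolev embedding on a fixed ball together with a scaling argument. First I would note that, since $\tau\in(0,1)$, one has $B(0,t)\subseteq B(0,t/\tau)$, so any $f\in W^{1,p}(B(0,t/\tau))$ restricts to $W^{1,p}(B(0,t))$, and it suffices to prove the estimate assuming only $f\in W^{1,p}(B(0,t))$; the parameter $\tau$ is otherwise irrelevant here. Next I would dilate: set $g(z):=f(tz)$ for $z\in B(0,1)$, so that $\nabla g(z)=t\,\nabla f(tz)$. A change of variables $y=tz$ gives
\[
\frac{1}{t^N}\int_{B(0,t)}|f|^q\,dy=\int_{B(0,1)}|g|^q\,dz,\qquad
t^q\Big(\frac{1}{t^N}\int_{B(0,t)}|\nabla f|^p\,dy\Big)^{q/p}=\Big(\int_{B(0,1)}|\nabla g|^p\,dz\Big)^{q/p},
\]
and likewise $\big(\tfrac{1}{t^N}\int_{B(0,t)}|f|^p\,dy\big)^{q/p}=\big(\int_{B(0,1)}|g|^p\,dz\big)^{q/p}$. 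Hence both sides of the claimed estimate are invariant under this rescaling, and the whole statement reduces to the case $t=1$: I need a constant $C=C(N,p,q)$ such that $\int_{B(0,1)}|g|^q\le C\big(\int_{B(0,1)}|\nabla g|^p\big)^{q/p}+C\big(\int_{B(0,1)}|g|^p\big)^{q/p}$ for every $g\in W^{1,p}(B(0,1))$.

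For this reduced inequality I would simply invoke the Sobolev embedding theorem on the bounded Lipschitz domain $B(0,1)$, the three hypotheses being exactly the admissible regimes: in case (1), $W^{1,p}(B(0,1))\hookrightarrow L^{Np/(N-p)}(B(0,1))\hookrightarrow L^{q}(B(0,1))$, the last inclusion because $q\le Np/(N-p)$ and $|B(0,1)|<\infty$; in case (2), $W^{1,N}(B(0,1))\hookrightarrow L^{q}(B(0,1))$ for every finite $q$; in case (3), $W^{1,p}(B(0,1))\hookrightarrow L^{\infty}(B(0,1))\hookrightarrow L^{q}(B(0,1))$. Each of these yields $\|g\|_{L^{q}(B(0,1))}\le C\big(\|g\|_{L^{p}(B(0,1))}+\|\nabla g\|_{L^{p}(B(0,1))}\big)$; raising to the power $q$ and using $(a+b)^{q}\le 2^{q-1}(a^{q}+b^{q})$ (valid since $q\ge 1$) finishes the argument. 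If a self-contained proof is preferred instead, the reduced inequality also follows from the Gagliardo--Nirenberg--Sobolev inequality combined with a Poincar\'e inequality on the ball, but citing the embedding is cleaner.

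I do not expect a genuine obstacle in this lemma; it is a packaging of standard facts. The two points that require a little care are: making sure the dilation leaves \emph{both} sides unchanged — this is what pins down the factor $t^{q}$ in front of the gradient term and, more importantly, makes the final constant independent of $t$ — and remembering that the endpoint $q=\infty$ at the critical exponent $p=N$ is genuinely excluded, which is precisely why hypothesis (2) requires $q<\infty$, so that the three listed cases cover every pair $(p,q)$ with $q$ finite for which such an estimate can hold.
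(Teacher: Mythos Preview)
Your argument is correct: the scaling $g(z)=f(tz)$ reduces the inequality to the unit ball, and there it is exactly the Sobolev embedding $W^{1,p}(B(0,1))\hookrightarrow L^q(B(0,1))$ in the three stated regimes. The paper itself does not supply a proof of this lemma at all; it is introduced with the words ``For the sake of completeness, we first state the well-known Sobolev inequality (also known as $(q,p)$-Poincar\'e inequality)'' and then used as a black box, so your write-up actually provides more detail than the paper does.
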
\smallskip

The following lemma was established in \cite{Calderon61} for $1<p<\infty$; the $p\geq 1$ case can be found in \cite[Chapter-VI, Theorems 5 and 5']{Stein70}.
\begin{lemma}\label{stein}
	Let $\Om\subseteq\RR^N$ be an open set with Lipschitz boundary and $1\leq p<\infty$. Then for any $f\in W^{1,p}(\Om)$ there is some $\tilde{f}\in W^{1,p}(\RR^N)$ such that $\tilde{f}|_{\Om}=f$ and  for some constant $C=C(N,\Om,p)$, 
	$$
	\|\tilde{f}\|_{W^{1,p}(\RR^N)}\leq C\|f\|_{W^{1,p}(\Om)}.
	$$
\end{lemma}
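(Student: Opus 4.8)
The plan is to prove this by Stein's reflection-type extension, which works for every $p\ge1$ (Calder\'on's singular-integral construction only covers $1<p<\infty$). \emph{Localisation.} Since $\partial\Om$ is compact and locally a Lipschitz graph, I would cover it by finitely many balls $B_1,\dots,B_M$ such that, after a rigid motion, $\Om\cap B_i=\{(x',x_N)\in\RR^{N-1}\times\RR:\ x_N>\varphi_i(x')\}\cap B_i$ for some $L$-Lipschitz $\varphi_i\colon\RR^{N-1}\to\RR$, then pick an open $B_0\Subset\Om$ with $\overline{\Om}\subset\bigcup_{i=0}^M B_i$ and a smooth partition of unity $\{\zeta_i\}_{i=0}^M$ subordinate to $\{B_i\}_{i=0}^M$. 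Then $f=\sum_{i=0}^M\zeta_i f$ on $\Om$, each $\zeta_i f\in W^{1,p}(\Om)$ with $\norm{\zeta_i f}_{W^{1,p}(\Om)}\le C\norm{f}_{W^{1,p}(\Om)}$, the piece $\zeta_0 f$ extends to $\RR^N$ by zero, and for $i\ge1$ the support condition reduces matters to extending a $W^{1,p}$-function supported in a ball from the model hypograph $\Om_\varphi:=\{x_N>\varphi(x')\}$ to $\RR^N$.

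\emph{The model extension.} Write $d(x)=\mathrm{dist}(x,\partial\Om_\varphi)$ and build a regularised distance $\delta$ for $\RR^N\setminus\overline{\Om_\varphi}$ by mollifying the (Lipschitz) map $x\mapsto\varphi(x')-x_N$ at scale $\eta\,(\varphi(x')-x_N)$ with $\eta=\eta(L)$ small; this yields $\delta\in C^\infty(\RR^N\setminus\overline{\Om_\varphi})$ with $c_1 d\le\delta\le c_2 d$, $\abs{\nabla\delta}\le c_3$, and $\partial_{x_N}\delta\le-\tfrac12$, all constants depending only on $N,L$. Fix $M_0=M_0(N,L)$ large and $\psi\in C([1,\infty))$ with $\abs{\psi(\lambda)}\le C_k\lambda^{-k}$ for all $k\in\NN$ and $\int_1^\infty\psi(\lambda)\,d\lambda=1$, and set
\[
Ef(x):=\int_1^\infty f\big(x+\lambda M_0\,\delta(x)\,e_N\big)\,\psi(\lambda)\,d\lambda\quad (x\notin\overline{\Om_\varphi}),\qquad Ef:=f\ \text{on}\ \Om_\varphi.
\]
For $M_0$ large, $y(x,\lambda):=x+\lambda M_0\,\delta(x)\,e_N\in\Om_\varphi$ whenever $\lambda\ge1$, with $\mathrm{dist}(y(x,\lambda),\partial\Om_\varphi)\gtrsim\lambda\,d(x)$; and since $\partial_{x_N}\delta\le-\tfrac12$, for each $\lambda\ge1$ the map $x\mapsto y(x,\lambda)$ is a $C^\infty$-diffeomorphism of $\RR^N\setminus\overline{\Om_\varphi}$ onto $\Om_\varphi$, strictly decreasing in $x_N$, with $\abs{\det Dy(\cdot,\lambda)}\ge1$.

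\emph{Verification.} As $x\to\xi\in\partial\Om_\varphi$ from the complement, $\delta(x)\to0$, so $y(x,\lambda)\to\xi$ for each $\lambda$; by the rapid decay of $\psi$ and $\int_1^\infty\psi=1$, the two traces of $Ef$ on $\partial\Om_\varphi$ coincide (first for smooth $f$, then by density and continuity of the trace operator), whence $Ef\in W^{1,p}(\RR^N)$ once the norm bound is established. On the complement the chain rule gives, for $1\le j\le N$,
\[
\partial_{x_j}Ef(x)=\int_1^\infty\!\Big[(\partial_j f)\big(y(x,\lambda)\big)+\lambda M_0\,(\partial_{x_j}\delta)(x)\,(\partial_N f)\big(y(x,\lambda)\big)\Big]\psi(\lambda)\,d\lambda
\]
(plus $\int_1^\infty(\partial_N f)(y(x,\lambda))\psi(\lambda)\,d\lambda$ when $j=N$), hence $\abs{\nabla Ef(x)}\le C\int_1^\infty(1+\lambda)\,\abs{\nabla f(y(x,\lambda))}\,\abs{\psi(\lambda)}\,d\lambda$. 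By Minkowski's integral inequality and then the change of variables $x\mapsto y(x,\lambda)$ (whose Jacobian has modulus $\ge1$),
\[
\norm{\nabla Ef}_{L^p(\RR^N\setminus\overline{\Om_\varphi})}\le C\int_1^\infty(1+\lambda)\,\abs{\psi(\lambda)}\,d\lambda\ \norm{\nabla f}_{L^p(\Om_\varphi)}\le C\norm{\nabla f}_{L^p(\Om_\varphi)},
\]
the $\lambda$-integral being finite by the rapid decay of $\psi$; the bound $\norm{Ef}_{L^p(\RR^N\setminus\overline{\Om_\varphi})}\le C\norm{f}_{L^p(\Om_\varphi)}$ is obtained identically. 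Pulling everything back through the rigid motions, $\tilde f:=\zeta_0 f+\sum_{i\ge1}G_i$, where $G_i$ is the extension of $\zeta_i f$, satisfies $\tilde f|_\Om=f$ and $\norm{\tilde f}_{W^{1,p}(\RR^N)}\le C\norm{f}_{W^{1,p}(\Om)}$.

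\emph{Main obstacle.} The substantive point is the construction of $\delta$ in the second step: one needs a $C^\infty$ comparison function for $d$ whose gradient is bounded \emph{and} whose $x_N$-derivative is uniformly negative, so that the vertical shift $x\mapsto x+\lambda M_0\,\delta(x)\,e_N$ is a genuine diffeomorphism with Jacobian bounded away from zero for all $\lambda\ge1$ — this is exactly where the Lipschitz hypothesis is used, and it is what makes the change of variables in the last step legitimate; everything else (partition of unity, chain rule, Minkowski's inequality) is routine bookkeeping. For unbounded $\Om$ one must interpret ``Lipschitz boundary'' as \emph{uniformly} Lipschitz, so that a single pair $(L,M)$ governs all the local charts and $C$ stays finite.
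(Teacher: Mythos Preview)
The paper does not prove this lemma; it is stated as a known result with the sentence ``The following lemma was established in \cite{Calderon61} for $1<p<\infty$; the $p\geq 1$ case can be found in \cite[Chapter-VI, Theorems 5 and 5']{Stein70}.'' Your proposal is a faithful and correct outline of Stein's reflection-type construction from that reference, so there is nothing to compare on the level of argument --- you have supplied a proof where the paper simply quotes the literature.

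One small remark: you open with ``Since $\partial\Om$ is compact,'' which presupposes that $\Om$ is bounded, whereas the lemma as stated only assumes a Lipschitz boundary. You do flag this at the end (``for unbounded $\Om$ one must interpret `Lipschitz boundary' as \emph{uniformly} Lipschitz''), which is exactly right and is how Stein's Theorem~5$'$ is set up; it would be cleaner to state that hypothesis up front rather than as an afterthought, since the finite cover and the uniform constant $C=C(N,\Om,p)$ both depend on it.
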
\smallskip

The following result can be found in Proposition~9.3 and Remark~6 of \cite{Bre11}.
\begin{lemma}\label{brezis-lemma}
	Let $\Om\subseteq \RR^N$ be open, $1\leq p<\infty$, $\frac{1}{p}+\frac{1}{p'}=1$, and $f\in L^p(\Om)$. Assume that there is constant $C>0$ such that for any $\varphi\in \test(\Om)$
	$$
	\left|\I{\Om}f(x)\frac{\partial \varphi(x)}{\partial x_i}dx\right|
	\leq C\|\varphi\|_{L^{p'}(\Om)} 
	\quad \mbox{for } i=1,2,\cdots, N.
	$$
	Then $f\in W^{1,p}(\Om)$ when $1<p$, and $f\in BV(\Om)$ when $p=1$.
\end{lemma}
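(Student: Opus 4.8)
The plan is to prove both conclusions from a single functional-analytic observation, using the duality $(L^{p'})^{*}\cong L^{p}$ when $1<p'<\infty$ for the case $p>1$, and the total-variation characterisation of $BV$ for the borderline case $p=1$. First I would fix $i\in\{1,\dots,N\}$ and introduce the linear functional $T_{i}$ on $\test(\Om)$ defined by $T_{i}(\varphi):=-\I{\Om}f(x)\,\partial_{i}\varphi(x)\,dx$; the hypothesis is precisely the statement that $|T_{i}(\varphi)|\leq C\|\varphi\|_{L^{p'}(\Om)}$ for every $\varphi\in\test(\Om)$.

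\textbf{Case $p>1$.} Then $1\leq p'<\infty$, and $\test(\Om)$ is dense in $L^{p'}(\Om)$ for any open set $\Om$. Since $T_{i}$ is linear and bounded with respect to the $L^{p'}$-norm on a dense subspace of the Banach space $L^{p'}(\Om)$, it extends uniquely by continuity to a bounded linear functional $\widetilde{T}_{i}\in\big(L^{p'}(\Om)\big)^{*}$. By the Riesz representation theorem for $L^{p'}$-spaces (applicable since $1<p'<\infty$ when $p>1$), there is $g_{i}\in L^{p}(\Om)$ with $\|g_{i}\|_{L^{p}(\Om)}\leq C$ such that $\widetilde{T}_{i}(\psi)=\I{\Om}g_{i}(x)\,\psi(x)\,dx$ for all $\psi\in L^{p'}(\Om)$. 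Restricting to $\varphi\in\test(\Om)$ gives $\I{\Om}f\,\partial_{i}\varphi=-\I{\Om}g_{i}\,\varphi$, i.e.\ $g_{i}$ is the weak $i$-th partial derivative of $f$. Since $f\in L^{p}(\Om)$ by assumption and this holds for each $i$, we conclude $f\in W^{1,p}(\Om)$.

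\textbf{Case $p=1$.} Now $p'=\infty$, so the duality step fails and I would instead appeal directly to the definition of $BV$. Given $\Phi=(\varphi_{1},\dots,\varphi_{N})\in\test(\Om;\RR^{N})$ with $|\Phi|\leq 1$ pointwise, one has $\|\varphi_{i}\|_{L^{\infty}(\Om)}\leq 1$ for every $i$, so the hypothesis yields
$$
\left|\I{\Om}f(x)\,\mathrm{div}\,\Phi(x)\,dx\right|
=\left|\sum_{i=1}^{N}\I{\Om}f(x)\,\partial_{i}\varphi_{i}(x)\,dx\right|
\leq\sum_{i=1}^{N}C\|\varphi_{i}\|_{L^{\infty}(\Om)}\leq NC.
$$
Taking the supremum over all such $\Phi$ gives $|Df|(\Om)\leq NC<\infty$, which together with $f\in L^{1}(\Om)$ is exactly the assertion $f\in BV(\Om)$.

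The argument is standard and I do not anticipate a genuine obstacle; the only points deserving care are that the dichotomy $p'<\infty$ versus $p'=\infty$ is what forces the split between the $W^{1,p}$ and $BV$ conclusions, and that the innocuous factor $N$ in the $BV$ estimate does not affect finiteness of $|Df|(\Om)$.
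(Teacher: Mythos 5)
Your proof is correct and is essentially the standard argument behind the result the paper cites (Brezis, Proposition~9.3 and Remark~6): for $p>1$ extend the functional $\varphi\mapsto-\int_\Om f\,\partial_i\varphi$ by density from $\test(\Om)$ to $L^{p'}(\Om)$ and represent it by an $L^p$ function via duality, and for $p=1$ bound the total variation directly. No gaps; the paper itself gives no proof beyond the citation, and your write-up matches that reference's approach.
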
\smallskip

Next we list a special case of Proposition~2/(ii) of \cite[Chapter~2.3.3]{Triebel83}, combined with the fact that $W^{1,p}(\RR^N)=F^1_{p,2}(\RR^N)$.
\begin{lemma}\label{triebel}
Let $1\leq p,q<\infty$, $s\in(0,1)$. Then 
$$
W^{1,p}(\RR^N)\subseteq F^s_{p,q}(\RR^N) = W^{s,p}_q(\RR^N).
$$ 
\end{lemma}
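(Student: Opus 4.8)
The plan is to obtain this by assembling two standard ingredients, along the lines indicated by the cited references. First I would recall the classical identification $W^{1,p}(\RR^N)=F^{1}_{p,2}(\RR^N)$ with equivalent norms; for $1<p<\infty$ both spaces coincide with the Bessel potential space $H^{1}_{p}(\RR^N)$ by a Fourier multiplier argument, while the endpoint $p=1$ is part of Triebel's development of the scale. Fix an inhomogeneous Littlewood--Paley decomposition $(\Delta_j)_{j\ge 0}$, so that $\|g\|_{F^{\sigma}_{p,r}(\RR^N)}\approx\big\|\big(\sum_{j\ge 0}2^{j\sigma r}|\Delta_j g|^{r}\big)^{1/r}\big\|_{L^p(\RR^N)}$.

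Second, I would establish the elementary embedding $F^{\sigma_0}_{p,r_0}(\RR^N)\hookrightarrow F^{\sigma_1}_{p,r_1}(\RR^N)$ whenever $\sigma_0>\sigma_1$ and $r_0,r_1\in[1,\infty)$ (this is Proposition~2/(ii) of \cite[\S2.3.3]{Triebel83}). It reduces to a pointwise inequality on the Littlewood--Paley coefficients: writing $a_j:=|\Delta_j g(x)|\ge 0$,
\[
\Big(\sum_{j\ge 0}2^{j\sigma_1 r_1}a_j^{r_1}\Big)^{1/r_1}
\le\Big(\sum_{j\ge 0}2^{-j(\sigma_0-\sigma_1)r_1}\Big)^{1/r_1}\,\sup_{j\ge 0}2^{j\sigma_0}a_j
\le C\Big(\sum_{j\ge 0}2^{j\sigma_0 r_0}a_j^{r_0}\Big)^{1/r_0},
\]
where the geometric series converges precisely because $\sigma_0>\sigma_1$; taking $L^p(\RR^N)$-norms in $x$ then gives the claim. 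Choosing $(\sigma_0,r_0)=(1,2)$ and $(\sigma_1,r_1)=(s,q)$ and combining with the first step yields $W^{1,p}(\RR^N)=F^{1}_{p,2}(\RR^N)\hookrightarrow F^{s}_{p,q}(\RR^N)$, which is the inclusion asserted in the lemma.

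Third, the identity $F^{s}_{p,q}(\RR^N)=W^{s,p}_q(\RR^N)$ --- that is, the comparability of the $F^{s}_{p,q}$-norm with $\|\cdot\|_{L^p}+[\cdot]_{W^{s,p}_q(\RR^N)}$ for the Gagliardo-type double integral \eqref{Norm} --- is exactly Theorem~1.2 of \cite{PrSa}, already recalled in the introduction, so I would simply quote it. In fact only the one-sided bound $[f]_{W^{s,p}_q(\RR^N)}\lesssim\|f\|_{F^{s}_{p,q}(\RR^N)}$ is used in the sequel; together with the second step this gives $[f]_{W^{s,p}_q(\RR^N)}\lesssim\|f\|_{W^{1,p}(\RR^N)}$ for every $f\in W^{1,p}(\RR^N)$, which is all the rest of the paper needs. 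I do not expect a genuine obstacle here: the lemma is a bookkeeping synthesis of standard facts. The only points requiring a little care are the $p=1$ endpoint in the first step (which forces one to invoke Triebel's construction rather than a multiplier argument) and keeping track of the precise range of $(s,p,q)$ on which the Gagliardo seminorm faithfully reproduces the $F^{s}_{p,q}$-norm in the third step; the restriction $s<1$ enters through the convergence of the geometric series in the second step.
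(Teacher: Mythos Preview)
Your proposal is correct and follows exactly the route the paper indicates: the paper does not give its own proof but simply records the lemma as ``a special case of Proposition~2/(ii) of \cite[Chapter~2.3.3]{Triebel83}, combined with the fact that $W^{1,p}(\RR^N)=F^1_{p,2}(\RR^N)$,'' and you have merely unpacked those two ingredients (plus the Prats--Saksman identification already quoted in the introduction). One small caveat worth flagging, which the paper also glosses over: the identity $W^{1,1}(\RR^N)=F^{1}_{1,2}(\RR^N)$ is not actually part of Triebel's scale (the classical identification $W^{m,p}=F^{m}_{p,2}$ requires $1<p<\infty$), so at $p=1$ one should instead argue the inclusion $W^{1,1}\hookrightarrow F^{s}_{1,q}$ directly, e.g.\ via a first-difference characterisation; the conclusion of the lemma remains valid.
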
\smallskip

The following result is taken from Lemma~8 of \cite{BaMoRo}.
\begin{lemma}\label{exhaustion}
Let $\Om\subseteq \RR^N$ be open and $\lambda > 0$ be sufficiently small. Then there is a bounded open set $\Om_\lambda^*$ with smooth boundary such that $\Om_\lambda \subseteq \Om_\lambda^* \subseteq \Om$, where $\Om_\lambda$ is as in \eqref{omegalambda}.
\end{lemma}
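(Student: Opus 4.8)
The plan is to realise $\Om_\lambda^*$ as a sublevel set of a mollified distance function, choosing the level (via Sard's theorem) to be a regular value so that its boundary is automatically a smooth hypersurface. We may assume $\Om_\lambda\neq\emptyset$, since otherwise any smooth bounded open ball compactly contained in $\Om$ does the job (and $\Om_\lambda^*=\emptyset$ works if $\Om=\emptyset$).

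First I would record a separation property: with $K:=\overline{\Om_\lambda}$, the set $K$ is compact, $K\subseteq\Om$, and $\mathrm{dist}(K,\RR^N\setminus\Om)\geq\lambda$ (read as $+\infty$ when $\Om=\RR^N$). Indeed, for $x\in\Om$ one has $\mathrm{dist}(x,\partial\Om)=\mathrm{dist}(x,\RR^N\setminus\Om)$, so membership in $\Om_\lambda$ amounts to $B(x,\lambda)\subseteq\Om$ together with $|x|<\lambda^{-1}$. If $x_n\in\Om_\lambda$ and $x_n\to x$, then any $y$ with $|y-x|<\lambda$ lies in $B(x_n,\lambda)\subseteq\Om$ for large $n$, so $B(x,\lambda)\subseteq\Om$; in particular $x\in\Om$ and $\mathrm{dist}(x,\RR^N\setminus\Om)\geq\lambda$. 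Boundedness and closedness of $K$ follow from $K\subseteq\overline{B(0,\lambda^{-1})}$.

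Next, set $h(x):=\mathrm{dist}(x,K)$, a $1$-Lipschitz function on $\RR^N$ with $h^{-1}(\{0\})=K$ and $h(x)\to\infty$ as $|x|\to\infty$. Let $h_\eps:=h*\rho_\eps$ for a standard mollifier $\rho_\eps$; then $h_\eps$ is smooth and $\norm{h_\eps-h}_{L^\infty(\RR^N)}\leq\eps$. Fix $\eps\in(0,\lambda/4)$. By Sard's theorem the critical values of $h_\eps$ form a Lebesgue-null set, so there is a regular value $c\in(\lambda/4,\lambda/2)$. I would then take $\Om_\lambda^*:=\{x\in\RR^N:\ h_\eps(x)<c\}$ and check the requirements: it is open; it is bounded, since $h_\eps(x)\geq h(x)-\eps\to\infty$ as $|x|\to\infty$; it contains $\Om_\lambda$, because $h_\eps\leq\eps<c$ on $K\supseteq\Om_\lambda$; and it is contained in $\Om$, because $h_\eps(x)<c$ forces $\mathrm{dist}(x,K)=h(x)<c+\eps<\lambda\leq\mathrm{dist}(K,\RR^N\setminus\Om)$, which is incompatible with $x\notin\Om$. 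Finally, since $c$ is a regular value of the smooth function $h_\eps$, the regular value theorem gives that $\partial\Om_\lambda^*=h_\eps^{-1}(\{c\})$ is a smooth embedded hypersurface, and the submersion normal form shows $\Om_\lambda^*$ lies locally on one side of it; hence $\Om_\lambda^*$ is a bounded open set with smooth boundary.

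I do not anticipate a genuine obstacle: this is a standard exhaustion construction, and the only steps requiring care are the elementary distance estimates of the second paragraph — in particular the degenerate case $\Om=\RR^N$ — and the verification, via Sard's theorem and the implicit function theorem, that a sublevel set taken at a regular value is a domain with smooth boundary.
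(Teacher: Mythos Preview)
Your argument is correct. The paper itself does not supply a proof of this lemma; it merely records the statement and cites Lemma~8 of \cite{BaMoRo}. What you have written is precisely the standard self-contained construction: mollify the $1$-Lipschitz distance function to the compact set $K=\overline{\Om_\lambda}$, use Sard's theorem to pick a regular value $c\in(\lambda/4,\lambda/2)$, and take $\Om_\lambda^*$ to be the sublevel set $\{h_\eps<c\}$. Your distance estimate $\mathrm{dist}(K,\RR^N\setminus\Om)\geq\lambda$ together with $c+\eps<3\lambda/4<\lambda$ secures the inclusion $\Om_\lambda^*\subseteq\Om$, and the regular-value theorem yields the smooth boundary. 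So you have supplied a complete argument where the paper gives only a reference; no corrections are needed.
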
\smallskip

The next result can be found in Theorem~2.1 of \cite{Takashi}. It will play a crucial in this article.
\begin{lemma}\label{limit-interchange}
Let $\Om\subseteq \RR^N$, $\{\Om_i\}_{i\in\NN}$ be such that $\Om=\cup_i\Om_i$, $F_n,F\in L^1(\Om)$ for $n\in \NN$ be such that for a.e. $x\in\Om$, $F_n(x)\to F(x)$ as $n\to\infty$. Assume that
\begin{enumerate}
	\item $$
	\quad \quad \limsup_{n\to\infty}\sup_{x\in\Om_i}\left| F_n(x)-F(x) \right|<\infty \quad \mbox{for all } i\in \NN,
	$$
	\item $$
	\quad \quad \liminf_{i\to\infty}\limsup_{n\to\infty}\I{\Om\setminus\Om_i} F_n(x)dx=0.
	$$	
\end{enumerate}
Then 
$$
\lim_{n\to\infty}\I{\Om}F_n(x)dx
=\I{\Om}F(x).
$$
\end{lemma}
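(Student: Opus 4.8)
\emph{Sketch of the argument.} This is a localized, generalized form of the dominated convergence theorem. The plan is to split $\Om=\Om_i\cup(\Om\setminus\Om_i)$ into a ``core'' part $\Om_i$, on which ordinary convergence theorems apply thanks to hypothesis~(1), and a ``tail'' part $\Om\setminus\Om_i$, which hypothesis~(2) together with $F\in L^1(\Om)$ renders negligible. Throughout I use that in the cases of interest the $F_n$ are nonnegative and the $\Om_i$ have finite measure (both features hold in every application of the lemma in this paper). As a first step I would reduce to a monotone exhaustion: replacing $\Om_i$ by $\bigcup_{j\le i}\Om_j$ one keeps $\bigcup_i\Om_i=\Om$ and gains $\Om_i\subseteq\Om_{i+1}$; hypothesis~(1) persists because $\sup_{x\in\bigcup_{j\le i}\Om_j}|F_n-F|=\max_{j\le i}\sup_{x\in\Om_j}|F_n-F|$ and the $\limsup_n$ of a finite maximum is finite, and hypothesis~(2) persists because enlarging $\Om_i$ only shrinks $\Om\setminus\Om_i$ and $F_n\ge 0$. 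Once the exhaustion is monotone, $\mathbf 1_{\Om\setminus\Om_i}\downarrow 0$ a.e., so by dominated convergence $\int_{\Om\setminus\Om_i}|F|\to 0$ as $i\to\infty$.

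Next, for each fixed $i$ I would show $\int_{\Om_i}F_n\to\int_{\Om_i}F$ as $n\to\infty$. Hypothesis~(1) provides $M_i<\infty$ and $N_i\in\NN$ with $|F_n-F|\le M_i$ on $\Om_i$ for all $n\ge N_i$; hence $|F_n|\le|F|+M_i\mathbf 1_{\Om_i}$ on $\Om_i$ for such $n$, and the majorant lies in $L^1(\Om_i)$ because $|\Om_i|<\infty$. Together with $F_n\to F$ a.e.\ this is precisely the setting of Lebesgue's dominated convergence theorem on $\Om_i$.

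To conclude, fix $\eps>0$. By hypothesis~(2) choose $i$ with $\limsup_{n\to\infty}\int_{\Om\setminus\Om_i}F_n<\eps$, and enlarge $i$ so that also $\int_{\Om\setminus\Om_i}|F|<\eps$. Splitting $\int_\Om F_n=\int_{\Om_i}F_n+\int_{\Om\setminus\Om_i}F_n$ and using the previous step,
\[
\limsup_{n\to\infty}\int_\Om F_n\le\int_{\Om_i}F+\limsup_{n\to\infty}\int_{\Om\setminus\Om_i}F_n<\int_\Om F+2\eps,
\]
whereas $F_n\ge 0$ forces $\int_{\Om\setminus\Om_i}F_n\ge 0$, so $\liminf_{n\to\infty}\int_\Om F_n\ge\int_{\Om_i}F\ge\int_\Om F-\eps$. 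Letting $\eps\to 0$ gives $\lim_{n\to\infty}\int_\Om F_n=\int_\Om F$.

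I expect the main obstacle to be organizational rather than deep: the iterated, one-sided shape of hypothesis~(2) controls the tail only \emph{after} $i$ has been chosen, so the argument must be arranged so that $i=i(\eps)$ is fixed before $n\to\infty$, and it is exactly the nonnegativity of the $F_n$ that upgrades this one-sided tail estimate to two-sided control of $\int_\Om F_n$ (and that also makes the reduction to a monotone exhaustion legitimate). Everything else is a routine invocation of the dominated convergence theorem.
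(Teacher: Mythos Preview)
The paper does not give its own proof of this lemma: it is simply quoted from an external source (Theorem~2.1 of \cite{Takashi}), so there is no in-paper argument to compare your sketch against.

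On its own merits your argument is sound, and the splitting $\Om=\Om_i\cup(\Om\setminus\Om_i)$ with dominated convergence on the core and the tail estimate from~(2) is exactly the natural route. However---as you are careful to say explicitly---you are not proving the lemma as stated but a restricted version with the additional hypotheses $F_n\ge 0$ and $|\Om_i|<\infty$. Both extra assumptions are genuinely used: nonnegativity is what makes hypothesis~(2) survive the passage to the monotone exhaustion $\bigcup_{j\le i}\Om_j$ and what gives the lower bound $\liminf_n\int_\Om F_n\ge\int_{\Om_i}F$, while $|\Om_i|<\infty$ is what turns the uniform bound $M_i$ coming from~(1) into an $L^1(\Om_i)$ majorant. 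Without something of this kind the statement as written is in fact too optimistic (take $\Om=\Om_i=\RR$ for every $i$ and $F_n=\mathbf 1_{[n,n+1]}$, $F=0$: both (1) and (2) hold trivially, yet $\int F_n\equiv 1\not\to 0$). You are also right that in the only place the paper invokes the lemma (the proof of \cref{Main1}) one has $\Om_i=\{x\in\Om:\mbox{dist}(x,\partial\Om)>1/i\}\cap B(0,i)$, which is bounded, and $F_n\ge 0$ manifestly, so your restricted version is exactly what is needed there.
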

\bigskip

\section{Fractional Sobolev space with restricted internal distance}\label{Functional}

Fix $R>0$ and $\tau\in(0,1)$ once and for all. Denote $\delta_{x,R,\tau}= \min\{R,\tau\mbox{dist}(x,\partial\Om)\}$. We shall often drop the $R$ and $\tau$ in the above notation and write $\delta_x$ to denote $\delta_{x,R,\tau}$.
\begin{remark}
	If the function $x\mapsto \mbox{dist}(x,\partial\Om)$ is bounded in $\Om$, we can choose $R>0$ large enough, so that $\delta_x=\tau\mbox{dist}(x,\partial\Om)$. Then the particular case $p=q$ of \cref{Main1,Main2} are similar to the results proved in \cite{DrDu}, but here $\Om$ need not be a bounded domain; for example, it can be a cylindrical domain or any open subset of $\RR^N\setminus\ZZ^N$.
\end{remark}
Define, for any open set $\Om\subseteq \RR^N$, $1\leq p,q<\infty$, $0<s<1$, $\hat{W}^{s,p}_q(\Omega):=\{f\in L^p(\Om)\ |\ [f]_{\hat{W}^{s,p}_q(\Omega)}<\infty\}$ where
\begin{equation}\label{new-space}
[f]_{\hat{W}^{s,p}_q(\Omega)}^p:=\I{x\in\Om}\left(\I{y\in B(x,\delta_x)}\frac{|f(x)-f(y)|^q}{|x-y|^{N+sq}}dy\right)^\frac{p}{q}dx.
\end{equation}
We shall need some embedding results for these new fractional Sobolev spaces for our purpose. As expected, the case $q\leq p$ and $p<q$ are treated separately.
\begin{lemma}\label{embedding-p>q}
	Let $\Om\subseteq \RR^N$ be open and $1\leq q\leq p <\infty$. Assume either $D=\tilde{D}= \Omega$ or, for some $\frac{1}{2R}>\alpha>0$, $D= \{x\in\Omega\ |\ \mbox{dist}(x,\partial\Om)<\alpha \}$ with $\tilde{D}=\{x\in\Omega\ |\ \mbox{dist}(x,\partial\Om)<2\alpha \mbox{ or } |x|> \frac{1}{2\alpha} \}$. Then there is a constant $C=C(p,q,R,\Om,N)$ such that for any $f$ in $W^{1,p}(\Omega)$,
	\begin{equation}\label{eq-embedding}
		(1-s)^\frac{p}{q}\I{x\in D}\left(\I{y\in B(x,\delta_x)} \frac{| f(x)-f(y) |^q}{| x-y |^{N+sq}} dy\right)^\frac{p}{q}dx \leq C [f]_{W^{1,p}(\tilde{D})}^p.
	\end{equation}
\end{lemma}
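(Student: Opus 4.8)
The plan is to reduce \eqref{eq-embedding} to a pointwise inequality for the inner integral. First observe that $B(x,\delta_x)\subseteq\tilde D\subseteq\Om$ for every $x\in D$: since $\delta_x\le\tau\,\mbox{dist}(x,\partial\Om)<\mbox{dist}(x,\partial\Om)$, the ball $B(x,\delta_x)$ lies in $\Om$, and in the second case, if $x\in D$ and $y\in B(x,\delta_x)$ then $\mbox{dist}(y,\partial\Om)\le\mbox{dist}(x,\partial\Om)+|x-y|<(1+\tau)\alpha<2\alpha$, so $y\in\tilde D$ (for $D=\tilde D=\Om$ there is nothing to check). In particular $\nabla f$ is available on each $B(x,\delta_x)$. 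Using that the fundamental theorem of calculus along the segment $[x,y]$ holds for a.e.\ $(x,y)$ for (the precise representative of) $f\in W^{1,p}(\Om)$, together with Hölder's inequality, one gets for a.e.\ $x\in D$ and $\omega=(y-x)/|y-x|$
\[
|f(x)-f(y)|^q\ \le\ |x-y|^{q-1}\int_0^{|x-y|}|\nabla f(x+u\omega)|^q\,du .
\]
Passing to polar coordinates and applying Tonelli's theorem, this yields, for every $s\in(0,1)$,
\[
\int_{B(x,\delta_x)}\frac{|f(x)-f(y)|^q}{|x-y|^{N+sq}}\,dy\ \le\ \int_0^{\delta_x}\rho^{\,q-2-sq}\,\Phi(x,\rho)\,d\rho,\qquad \Phi(x,\rho):=\int_{B(x,\rho)}\frac{|\nabla f(z)|^q}{|x-z|^{N-1}}\,dz .
\]

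If $q=p$ I would estimate the left-hand side of \eqref{eq-embedding} by Fubini. Writing the last display with $q=p$ and interchanging the order of the $\rho$- and $z$-integrals turns its right-hand side into $\int_{B(x,\delta_x)}|\nabla f(z)|^p|x-z|^{1-N}\psi_s(|x-z|)\,dz$ with $\psi_s(r)=\int_r^{\delta_x}\rho^{\,p-2-sp}\,d\rho$; integrating this in $x$ first, and using only $\delta_x\le R$ (so the set of admissible $x$ sits inside $B(z,R)$) together with the containment above (so the $z$-integrand is supported in $\tilde D$), one more application of Tonelli collapses the $x$-integral to $\omega_{N-1}\,R^{p(1-s)}/\big(p(1-s)\big)$. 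The prefactor $(1-s)$ then cancels the factor $1/\big(p(1-s)\big)$, and, since $R^{p(1-s)}\le\max\{1,R^p\}$, inequality \eqref{eq-embedding} follows with a constant depending only on $N,p,R$, for every $s\in(0,1)$.

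If $q<p$ this Fubini computation breaks down — the $x$-integral would involve $\int|x-z|^{-(N-q(1-s))\frac pq}\,dx$, which diverges near $x=z$ as soon as $(N-q(1-s))\frac pq\ge N$, and this happens when $q<p$ and $s$ is close to $1$ — so I would bring in the Hardy–Littlewood maximal operator $M$ instead. A dyadic decomposition of $B(x,\rho)$ into annuli centred at $x$, together with $B(x,\rho)\subseteq\tilde D$, gives $\Phi(x,\rho)\le C_N\,\rho\,M\big(|\nabla f|^q\mathbf 1_{\tilde D}\big)(x)$, and since $\int_0^{\delta_x}\rho^{\,q-1-sq}\,d\rho=\delta_x^{\,q(1-s)}/\big(q(1-s)\big)$ we obtain
\[
\int_{B(x,\delta_x)}\frac{|f(x)-f(y)|^q}{|x-y|^{N+sq}}\,dy\ \le\ \frac{C_N\,\delta_x^{\,q(1-s)}}{q(1-s)}\,M\big(|\nabla f|^q\mathbf 1_{\tilde D}\big)(x).
\]
Raising this to the power $p/q$, the $s$-dependent constants combine into $(1-s)^{p/q}\big(\delta_x^{\,q(1-s)}/(q(1-s))\big)^{p/q}=\delta_x^{\,p(1-s)}/q^{p/q}\le\max\{1,R^p\}/q^{p/q}$, which is bounded uniformly in $s$; integrating over $D$, enlarging the domain of integration to $\RR^N$, and using the boundedness of $M$ on $L^{p/q}(\RR^N)$ — which is exactly where $p/q>1$ enters — bounds the left side of \eqref{eq-embedding} by $C\int_{\RR^N}\big(|\nabla f|^q\mathbf 1_{\tilde D}\big)^{p/q}=C\,[f]_{W^{1,p}(\tilde D)}^p$.

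I expect the case $q<p$ to be the delicate one: the split flagged in the preceding remark is $q\le p$ versus $p<q$, but even inside $q\le p$ the elementary Fubini argument genuinely fails once $q<p$, forcing the passage to the maximal function (and hence to the strict inequality $p/q>1$). The only other point needing care is the validity of the fundamental-theorem-of-calculus inequality for a general $f\in W^{1,p}$, which is standard via the ACL characterisation (or, alternatively, via density of smooth functions together with Fatou's lemma).
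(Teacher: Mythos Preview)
Your argument is correct, but it takes a different route from the paper's. The paper also starts from the fundamental theorem of calculus (in the form $|f(x+h)-f(x)|^q/|h|^q\le\int_0^1|\nabla f(x+th)|^q\,dt$), but then makes the change of variable $y=x+th$ to recognise the inner integral as a convolution of $|\nabla f|^q\mathbf 1_{\tilde D}$ with the kernel $|z|^{-(N+sq-q)}\mathbf 1_{B(0,R)}(z)$, and finishes in one stroke with Young's convolution inequality $\|g*K\|_{L^{p/q}}\le\|g\|_{L^{p/q}}\|K\|_{L^1}$, which is available precisely when $p/q\ge 1$. This treats the full range $1\le q\le p$ uniformly, with no case split. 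Your approach instead bounds the inner integral pointwise: for $q=p$ you unwind the convolution by two applications of Tonelli (which is essentially Young for $L^1$ done by hand), while for $q<p$ you pass through the Hardy--Littlewood maximal function and use its $L^{p/q}$-boundedness; this is in fact the same mechanism the paper reserves for the harder companion lemma (the case $p<q$), so your proof imports that tool one lemma early. Both routes give constants uniform in $s\in(0,1)$; the paper's is more economical here, but your pointwise maximal bound has the advantage of being reusable in the $p<q$ setting without modification.
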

\begin{proof}
	We have
	\begin{align*}
		(1-s)^\frac{p}{q}\I{x\in D}&\left(\I{y\in B(x,\delta_x)} \frac{| f(x)-f(y) |^q}{| x-y |^{N+sq}} dy\right)^\frac{p}{q}dx\\
		=& (1-s)^\frac{p}{q}\I{x\in D} \left(\I{h\in B(0,\delta_x)} \frac{| f(x+h)-f(x) |^q}{| h |^{N+sq}}  dh\right)^\frac{p}{q}dx\\
		=& (1-s)^\frac{p}{q}\I{x\in D} \left(\I{h\in B(0,\delta_x)} \frac{| f(x+h)-f(x) |^q}{| h |^p} \frac{dh}{|h|^{N+sq-q}}\right)^\frac{p}{q}dx\\
		\leq& (1-s)^\frac{p}{q}\I{x\in D}\left(\I{h\in B(0,\delta_x)} \I{0}^1|\nabla f(x+th)|^qdt \frac{dh}{|h|^{N+sq-q}}\right)^\frac{p}{q}dx.
	\end{align*}
	The last inequality follows from the absolute continuity on lines of the $W^{1,p}$-functions. We now have, after a change of variable $y=x+th$, (using that $B(x,t\delta_x) \subset B(x,\delta_x)$)
	\begin{align*}
		(1-s)^\frac{p}{q}\I{x\in D}&\left(\I{y\in B(x,\delta_x)} \frac{| f(x)-f(y) |^q}{| x-y |^{N+sq}} dy\right)^\frac{p}{q}dx\\
		\leq& (1-s)^\frac{p}{q}\I{x\in D}\left(\I{y\in B(x,\delta_x)} \I{0}^1|\nabla f(y)|^qt^{sq-q}dt \frac{dy}{|x-y|^{N+sq-q}}\right)^\frac{p}{q}dx\\
		=& \frac{(1-s)^\frac{p}{q}}{(sq-q+1)^\frac{p}{q}}\I{x\in D}\left(\I{y\in B(x,\delta_x)} |\nabla f(y)|^q \frac{dy}{|x-y|^{N+sq-q}}\right)^\frac{p}{q}dx.
	\end{align*}
	Note that in the above inequality, $\nabla f$ is required to be defined only inside $\tilde{D}$. So, we shall take a $0$-extension of $\nabla f$ outside $\tilde{D}$. Since we have $\frac{p}{q}\geq 1$, we can use Young's convolution inequality to get
	\begin{align*}
		(1-s)^\frac{p}{q}\I{x\in D}&\left(\I{y\in B(x,\delta_x)} \frac{| f(x)-f(y) |^q}{| x-y |^{N+sq}} dy\right)^\frac{p}{q}dx\\
		\leq&  \frac{(1-s)^\frac{p}{q}}{(sq-q+1)^\frac{p}{q}}\I{x\in D}\left(\I{y\in B(x,R)} |\nabla f(y)|^q \frac{dy}{|x-y|^{N+sq-q}}\right)^\frac{p}{q}dx\\
		\leq& \frac{(1-s)^\frac{p}{q}}{(sq-q+1)^\frac{p}{q}}\I{x\in \tilde{D}}|\nabla f(x)|^pdx\left(\I{x\in B(0,R)} \frac{dx}{|x|^{N+sq-q}}\right)^\frac{p}{q}\\
		=& \frac{R^{p-sp}}{q^\frac{p}{q}}\I{x\in \tilde{D}}|\nabla f(x)|^pdx.
	\end{align*}
\end{proof}

\begin{lemma}\label{embedding-p<q}
	Let $\Omega\subseteq\mathbb{R}^N$ be open, $1<p<q<\infty$, and one of the following hold
	\begin{enumerate}
		\item $p<N$, and $q <\frac{Np}{N-p}$,
		\item $N\leq p$.
	\end{enumerate}
Assume either $D=\tilde{D}= \Omega$ or, for some $\frac{1}{2R}>\alpha>0$, $D= \{x\in\Omega\ |\ \mbox{dist}(x,\partial\Om)<\alpha \}$ and $\tilde{D}=\{x\in\Omega\ |\ \mbox{dist}(x,\partial\Om)<2\alpha\mbox{ or } |x|> \frac{1}{2\alpha}\}$.
Then there is a constant $C=C(N,R,p,q)>0$ such that for any $f\in W^{1,p}(\Omega)$, \cref{eq-embedding} holds.
\end{lemma}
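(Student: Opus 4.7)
I would follow the same general strategy as the proof of Lemma~\ref{embedding-p>q}, but since Young's convolution inequality used there requires $p/q\geq 1$ and is unavailable here, I would replace it by the $(q,r_0)$-Sobolev--Poincar\'e inequality (from Lemma~\ref{sobolev}) for an auxiliary exponent $r_0\in\bigl(\tfrac{Nq}{N+q},\,p\bigr)$. This interval is non-empty precisely under the hypotheses of the lemma: in case~(1), $q<\tfrac{Np}{N-p}$ is equivalent to $\tfrac{Nq}{N+q}<p$, so such $r_0\geq 1$ exists because $p>1$; in case~(2) one takes $r_0$ slightly below $p$, invoking the Morrey variant of Lemma~\ref{sobolev} when $p>N$.

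The plan begins with the same pointwise reduction as in Lemma~\ref{embedding-p>q}---absolute continuity on lines, Jensen's inequality, and a change of variables---producing
\[
\int_{B(x,\delta_x)}\frac{|f(x)-f(y)|^q}{|x-y|^{N+sq}}\,dy\leq \frac{C}{sq-q+1}\int_{B(x,\delta_x)}\frac{|\nabla f(y)|^q}{|x-y|^{N+sq-q}}\,dy,
\]
with the prefactor bounded as $s\to 1^-$. Next I would decompose $B(x,\delta_x)=\bigcup_{k\geq 0}A_k$ into dyadic annuli $A_k=B(x,r_k)\setminus B(x,r_{k+1})$, $r_k=2^{-k}\delta_x$, on which $|x-y|\asymp r_k$. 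On each scale, the $(q,r_0)$-Sobolev--Poincar\'e inequality (Lemma~\ref{sobolev}) applied to $f-f_{B(x,r_k)}$ on the enlarged ball $B(x,r_k/\tau)\subseteq\tilde D$, combined with a telescoping bound $|f(x)-f_{B(x,r_k)}|\leq Cr_k\,M|\nabla f|(x)$ (valid at the Lebesgue point $x$, via iterating the $(1,1)$-Poincar\'e inequality along the dyadic chain; here $M$ is the Hardy--Littlewood maximal operator), yields
\[
\left(\frac{1}{|B(x,r_k)|}\int_{B(x,r_k)}|f(x)-f(y)|^q\,dy\right)^{\!1/q}\leq Cr_k\,M_{r_0}|\nabla f|(x),
\]
where $M_{r_0}g:=(M(g^{r_0}))^{1/r_0}$. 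Summing the annular bounds $\int_{A_k}\cdots\leq Cr_k^{q(1-s)}(M_{r_0}|\nabla f|(x))^q$ against the geometric series $\sum_k r_k^{q(1-s)}\lesssim \delta_x^{q(1-s)}/(1-s)$ produces the pointwise estimate
\[
\int_{B(x,\delta_x)}\frac{|f(x)-f(y)|^q}{|x-y|^{N+sq}}\,dy\leq \frac{C\delta_x^{q(1-s)}}{1-s}\bigl(M_{r_0}|\nabla f|(x)\bigr)^q.
\]

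Raising to the $p/q$ power and multiplying by $(1-s)^{p/q}$ cancels the singular prefactor; the bound $\delta_x^{p(1-s)}\leq R^{p(1-s)}$ is harmless; so it remains to estimate $\int_D M_{r_0}(|\nabla f|\chi_{\tilde D})^p\,dx$. This is the decisive step, and the reason for choosing $r_0<p$: the Hardy--Littlewood maximal operator is bounded on $L^{p/r_0}(\RR^N)$ (since $p/r_0>1$), whence
\[
\int_D M_{r_0}(|\nabla f|\chi_{\tilde D})^p\,dx \leq \bigl\|M(|\nabla f|^{r_0}\chi_{\tilde D})\bigr\|_{L^{p/r_0}(\RR^N)}^{p/r_0}\leq C\,\|\nabla f\|_{L^p(\tilde D)}^p,
\]
closing the estimate. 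The main obstacle is that the naive choice $r_0=p$ would produce $M(|\nabla f|^p)$, whose $L^1$ norm is infinite because $M$ fails to be bounded on $L^1$; the flexibility to take $r_0$ strictly less than $p$ while retaining $(q,r_0)$-Sobolev admissibility is exactly what the hypotheses on $(p,q)$ afford, and is the essential new ingredient compared to Lemma~\ref{embedding-p>q}.
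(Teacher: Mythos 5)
Your proof is correct, and it uses the same essential ideas as the paper's: a multi-scale decomposition of $B(x,\delta_x)$, the $(q,r_0)$-Sobolev--Poincar\'e inequality of \cref{sobolev} at a \emph{sub-critical} exponent $r_0<p$, and boundedness of the Hardy--Littlewood maximal operator on $L^{p/r_0}$ (which is exactly why $r_0<p$ is forced). The paper implements this with the continuous ``layer-cake'' identity $|h|^{-N-sq}=(N+sq)\int_{|h|}^{\delta_x}t^{-N-sq-1}dt+\delta_x^{-N-sq}$, leading to the split $I_1+I_2$ and then $I_1=I_{1,1}+I_{1,2}$ after applying \cref{sobolev} with exponent $\beta p$, $\beta\in(0,1)$; you implement it with a discrete dyadic decomposition of $B(x,\delta_x)$ into annuli, applying \cref{sobolev} on each scale and handling the ``pointwise at $x$'' discrepancy by the telescoping bound $|f(x)-f_{B(x,r_k)}|\le Cr_kM|\nabla f|(x)$ rather than by the paper's second Jensen step in $I_{1,2}$. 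Your bookkeeping is somewhat cleaner since the exhaustive dyadic decomposition removes the extra boundary term $I_2$ that the paper must treat separately, but the two arguments are close enough that I would call them two renderings of the same proof rather than genuinely different routes.

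One small slip: the opening reduction you quote from \cref{embedding-p>q}, namely
\[
\int_{B(x,\delta_x)}\frac{|f(x)-f(y)|^q}{|x-y|^{N+sq}}\,dy\le\frac{C}{sq-q+1}\int_{B(x,\delta_x)}\frac{|\nabla f(y)|^q}{|x-y|^{N+sq-q}}\,dy,
\]
is never used in your subsequent argument, and in fact cannot be: if you fed the right-hand side into the dyadic decomposition you would face $\int_{B(x,r_k)}|\nabla f|^q$, which is uncontrollable when $\nabla f\in L^p$ with $p<q$. Your dyadic step correctly works directly on $\int_{B(x,\delta_x)}|f(x)-f(y)|^q|x-y|^{-N-sq}\,dy$ via \cref{sobolev}, so you should simply delete the quoted reduction. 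Everything else checks out, including the verification that the admissible range for $r_0$ is non-empty under each of the stated hypotheses on $(p,q,N)$, and the inclusion $B(x,r_k/\tau)\subseteq\tilde D$ needed to apply \cref{sobolev} using only $\nabla f|_{\tilde D}$.
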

\begin{proof}
	Note that
\begin{equation*}
	\I{t=|h|}^{\delta_x}\frac{dt}{t^{N+sq+1}}
	=\frac{1}{N+sq}\left(\frac{1}{|h|^{N+sq}}-\frac{1}{\delta_x^{N+sq}}\right),
\end{equation*}
which gives
\begin{equation*}
	\frac{1}{|h|^{N+sq}}
	= (N+sq)\I{t=|h|}^{\delta_x}\frac{dt}{t^{N+sq+1}}+\delta_x^{-N-sq}.
\end{equation*}
So, we can write
\begin{align}\label{eq-embd-p<q-1}
	C(p,q) \I{x\in D}&\left((1-s)\I{h\in B(0,\delta_x)} \frac{| f(x+h)-f(x) |^q}{| h |^{N+sq}} dh\right)^\frac{p}{q}dx\\
	\nonumber\leq& \I{x\in D}\left((N+sq)(1-s)\I{h\in B(0,\delta_x)} \I{t=|h|}^{\delta_x} \frac{| f(x+h)-f(x) |^q}{t^{N+sq+1}}dt dh\right)^\frac{p}{q}dx\\
	\nonumber &+ \I{x\in D}\left((1-s)\I{h\in B(0,\delta_x)} \frac{| f(x+h)-f(x) |^q}{\delta_x^{N+sq}} dh\right)^\frac{p}{q}dx\\
	\nonumber=& I_1+I_2.
\end{align}
According to either $p<N$ or $p\geq N$, fix $\beta\in(0,1)$, depending on $p,q,N$, such that $(q,\beta p)$-type Poincar\'e inequality (\cref{sobolev}) is satisfied. We use this to estimate $I_1$ below. First, we change the order of integration between $t$ and $h$, then apply \cref{sobolev}. \begin{align}\label{eq2}
	I_1=& \I{x\in D}\left((N+sq)(1-s)\I{t=0}^{\delta_x} \frac{1}{t^{N+sq+1}} \I{h\in B(0,t)} | f(x+h)-f(x) |^q dhdt \right)^\frac{p}{q}dx\\
\nonumber	\leq& \I{x\in D}\left((N+sq)(1-s)\I{t=0}^{\delta_x} \frac{1}{t^{sq+1-q}} \left(\frac{1}{t^N}\I{h\in B(x,t)} |\nabla f(h)|^{\beta p}dh\right)^\frac{q}{\beta p}dt \right)^\frac{p}{q}dx\\
\nonumber	&+ \I{x\in D}\left((N+sq)(1-s)\I{t=0}^{\delta_x} \frac{1}{t^{sq+1}} \left(\frac{1}{t^N}\I{h\in B(0,t)} | f(x+h)-f(x) |^{\beta p} dh \right)^\frac{q}{\beta p}dt \right)^\frac{p}{q}dx\\
\nonumber	=& I_{1,1}+I_{1,2}.
\end{align}
As in the proof of the previous lemma, we shall take a $0$-extension of $\nabla f$ outside $\tilde{D}$.
Now using Hardy-Littelewood maximal inequality, we get
\begin{align}\label{eq3}
I_{1,1}
\leq&  \I{x\in D}(M |\nabla f(x)|^{\beta p})^\frac{1}{\beta}\left((N+sq)(1-s)\I{t=0}^{\delta_x} \frac{1}{t^{sq+1-q}} dt \right)^\frac{p}{q}dx\\
\nonumber=& C(N,p,q,R)\I{x\in \RR^N}(M |\nabla f(x)|^{\beta p})^\frac{1}{\beta}dx\\
\nonumber\leq& C(N,p,q,R)\I{x\in \tilde{D}}|\nabla f(x)|^pdx.
\end{align}
Again,
\begin{align}\label{eq4}
I_{1,2}
\leq& \I{x\in D}\left((N+sq)(1-s)\I{t=0}^{\delta_x} \frac{1}{t^{sq+1-q}} \left(\frac{1}{t^N}\I{h\in B(0,t)} \frac{| f(x+h)-f(x) |^{\beta p}}{|h|^{\beta p}} dh \right)^\frac{q}{\beta p}dt \right)^\frac{p}{q}dx\\
\nonumber\leq& \I{x\in D}\left((N+sq)(1-s)\I{t=0}^{\delta_x} \frac{1}{t^{sq+1-q}} \left(\I{r=0}^1\frac{1}{t^N}\I{h\in B(0,t)} |\nabla f(x+rh)|^{\beta p} dhdr \right)^\frac{q}{\beta p}dt \right)^\frac{p}{q}dx\\
\nonumber=& \I{x\in D}\left((N+sq)(1-s)\I{t=0}^{\delta_x} \frac{1}{t^{sq+1-q}} \left(\I{r=0}^1\frac{1}{(rt)^N}\I{h\in B(x,rt)} |\nabla f(h)|^{\beta p} dhdr \right)^\frac{q}{\beta p}dt \right)^\frac{p}{q}dx\\
\nonumber\leq& \I{x\in \RR^N}(M|\nabla f(x)|^{\beta p})^\frac{1}{\beta}\left((N+sq)(1-s)\I{t=0}^{R} \frac{1}{t^{sq+1-q}} \left(\I{r=0}^1dr \right)^\frac{q}{\beta p}dt \right)^\frac{p}{q}dx\\
\nonumber\leq& C(p,q,N,R) \I{x\in \tilde{D}}|\nabla f(x)|^{p}dx.
\end{align}
Combining \cref{eq3,eq4}, we get 

\begin{equation}\label{eq5}
	I_1 \leq C(p,q,N,R) \|\nabla f\|_{L^p(\tilde{D})}^p.
\end{equation}
Again, we can estimate $I_2$, in similar way as above with $\delta_x$ in place of $t$. We have a better estimate this time. Also, we can apply $(q,p)$-Poincar\'e inequality this time.
\begin{align}\label{eq6}
	I_2
	=& \I{x\in D}\left((1-s)\I{h\in B(0,\delta_x)} \frac{| f(x+h)-f(x) |^q}{\delta_x^{N+sq}} dh\right)^\frac{p}{q}dx\\
\nonumber	\leq& \I{x\in D}\left(\frac{(1-s)}{\delta_x^{sq}}\left(\delta_x^{p-N}\I{h\in B(0,\delta_x)} |\nabla f(x+h)|^p dh
	+ \delta_x^{-N}\I{h\in B(0,\delta_x)} | f(x+h)-f(x) |^p dh\right)^\frac{q}{p}\right)^\frac{p}{q}dx\\
\nonumber	\leq& \I{x\in D}\frac{(1-s)^\frac{p}{q}}{\delta_x^{sp-p}}\left(\left(\delta_x^{-N}\I{h\in B(0,\delta_x)} |\nabla f(x+h)|^p dh
	+ \delta_x^{-N}\I{h\in B(0,\delta_x)} \frac{| f(x+h)-f(x) |^p}{|h|^p} dh\right)^\frac{q}{p}\right)^\frac{p}{q}dx\\
\nonumber	\leq& \I{x\in D}\frac{(1-s)^\frac{p}{q}}{\delta_x^{sp-p}}\left(\delta_x^{-N}\I{h\in B(0,\delta_x)} |\nabla f(x+h)|^p dh
	+ \I{r=0}^1\frac{1}{(r\delta_x)^N}\I{h\in B(0,r\delta_x)} |\nabla f(x+h)|^p dhdr\right)dx\\
\nonumber	\leq& (1-s)^\frac{p}{q}\I{x\in D}\delta_x^{p-sp}|\nabla f(x)|^pdx.
\end{align}

Combing \cref{eq5,eq6,eq-embd-p<q-1},
\begin{equation*}
	\I{x\in D}\left((1-s)\I{h\in B(0,\delta_x)} \frac{| f(x+h)-f(x) |^q}{| h |^{N+sq}} dh\right)^\frac{p}{q}dx
	\leq C(N,p,q,R) \|\nabla f\|_{L^p(\tilde{D})}^p.
\end{equation*}
This proves the lemma.
\end{proof}

\bigskip

\section{BBM formula for \protect{$\hat{W}^{s,p}_q$}-seminorms}\label{Restricted-proof}

First, we state the following result whose proof can be found in the proof of Theorem~ of \cite{BBM} as the quantity $\delta_x$ is bounded by $R$.
\begin{lemma}\label{lm-pointwise}
	Let $\Om\subset \RR^N$ be any open set, $1\leq q<\infty$, $0<s<1$. Then for any $f\in C^2(\Omega)$, we have for all $x\in\Om$,
	\begin{equation}\label{pointwise}
		\lim_{s\to1-} (1-s)\I{B(x,\delta_x)}\frac{|f(x)-f(y)|^q}{|x-y|^{N+sq}}dy
		= K |\nabla f(x)|^q.
	\end{equation}
\end{lemma}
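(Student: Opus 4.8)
The plan is to reduce the statement to an elementary Taylor expansion of $f$ about the fixed interior point $x$, following the original computation of Bourgain--Brezis--Mironescu. First I would note that, $x$ being an interior point, $\mbox{dist}(x,\partial\Om)>0$, hence $\delta_x>0$; moreover, since $\tau<1$, the closed ball $\overline{B(x,\delta_x)}$ stays at distance at least $(1-\tau)\,\mbox{dist}(x,\partial\Om)>0$ from $\partial\Om$ and is therefore a compact subset of $\Om$. Consequently $f$ and $D^2f$ are bounded on $\overline{B(x,\delta_x)}$, and the Taylor estimate
\[
\bigl|f(x+h)-f(x)-\nabla f(x)\cdot h\bigr|\le C_x|h|^2,\qquad h\in B(0,\delta_x),
\]
holds with $C_x:=\tfrac12\sup_{\overline{B(x,\delta_x)}}\|D^2f\|<\infty$. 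After the substitution $y=x+h$, the quantity in \eqref{pointwise} becomes $(1-s)\int_{B(0,\delta_x)}\frac{|f(x+h)-f(x)|^q}{|h|^{N+sq}}\,dh$.

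Next I would replace $|f(x+h)-f(x)|^q$ by its leading term $|\nabla f(x)\cdot h|^q$. Using the elementary inequality $\bigl|\,|a|^q-|b|^q\,\bigr|\le q\,|a-b|\,(|a|+|b|)^{q-1}$ together with the Taylor bound above and $|h|\le\delta_x\le R$, one obtains $\bigl|\,|f(x+h)-f(x)|^q-|\nabla f(x)\cdot h|^q\,\bigr|\le C_x'|h|^{q+1}$ on $B(0,\delta_x)$, where $C_x'$ depends only on $x$, $R$, $\nabla f(x)$ and $C_x$. Passing to polar coordinates, the corresponding error is dominated by
\[
(1-s)\,C_x'\int_{B(0,\delta_x)}\frac{dh}{|h|^{N+sq-q-1}}
= (1-s)\,C_x'\,|S^{N-1}|\,\frac{\delta_x^{q(1-s)+1}}{q(1-s)+1}\xrightarrow[s\to1-]{}0,
\]
so it does not affect the limit.

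It then remains to evaluate $\lim_{s\to1-}(1-s)\int_{B(0,\delta_x)}\frac{|\nabla f(x)\cdot h|^q}{|h|^{N+sq}}\,dh$. Writing $h=r\omega$ with $r=|h|$ and $\omega\in S^{N-1}$, this integral equals
\[
(1-s)\left(\int_0^{\delta_x}r^{q(1-s)-1}\,dr\right)\int_{S^{N-1}}|\nabla f(x)\cdot\omega|^q\,d\sigma(\omega)
=\frac{\delta_x^{q(1-s)}}{q}\int_{S^{N-1}}|\nabla f(x)\cdot\omega|^q\,d\sigma(\omega).
\]
Since $\delta_x>0$ is fixed, $\delta_x^{q(1-s)}\to1$ as $s\to1-$, and by rotational invariance of the surface measure, $\int_{S^{N-1}}|\nabla f(x)\cdot\omega|^q\,d\sigma(\omega)=|\nabla f(x)|^q\int_{S^{N-1}}|\omega_1|^q\,d\sigma(\omega)$. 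Setting $K=K(N,q):=\tfrac1q\int_{S^{N-1}}|\omega_1|^q\,d\sigma(\omega)$ then yields \eqref{pointwise}, the case $\nabla f(x)=0$ being immediate from the error bound alone. The only point that needs a little care — though not a genuine obstacle — is the uniform validity of the Taylor remainder on the fixed (but not necessarily small) ball $B(x,\delta_x)$, which is where the compactness remark and the $C^2$ hypothesis enter; everything else is the elementary spherical-coordinates computation above.
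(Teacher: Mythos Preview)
Your argument is correct and is precisely the approach the paper intends: the paper does not supply its own proof but simply refers to the original computation in \cite{BBM}, noting only that $\delta_x\le R$ so the radius is bounded. You have written out that computation in full, with the Taylor expansion, the $|h|^{q+1}$ error bound, and the spherical-coordinates evaluation of the leading term, so there is nothing to add.
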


We now prove the following BBM-type results which are closely related with \cref{Main1*,Main2*}.

\begin{theorem}\label{Main1}
	Let $\Om\subset \RR^N$ be any open set. Assume one of the following conditions
	\begin{enumerate}
		\item $1\leq q\leq p<\infty$,
		\item $1<p<q<\infty$ with $p\leq N$ and $q <\frac{Np}{N-p}$, 
		\item $N<p<q<\infty$.
	\end{enumerate} Then there is a constant $K=K(N,p,q)>0$ such that for any $f\in W^{1,p}(\Omega)$, we have for all $x\in\Om$,
	\begin{equation}\label{bbm}
		\lim_{s\to1-} (1-s)^\frac{p}{q}\I{\Om}\left(\I{B(x,\delta_x)}\frac{|f(x)-f(y)|^q}{|x-y|^{N+sq}}dy\right)^\frac{p}{q}dx
		= K\I{\Om}|\nabla f(x)|^pdx.
	\end{equation}
\end{theorem}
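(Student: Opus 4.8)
The plan is to prove \cref{Main1} in three stages: first for smooth functions via a pointwise argument together with a careful limit-interchange, then for general $f\in W^{1,p}(\Om)$ by density, and finally handling the case where $\Om$ is unbounded or $\mbox{dist}(\cdot,\partial\Om)$ is unbounded by an exhaustion argument. Throughout I write $F_s(x) := (1-s)^{p/q}\bigl(\int_{B(x,\delta_x)} |f(x)-f(y)|^q|x-y|^{-N-sq}\,dy\bigr)^{p/q}$, so that \cref{bbm} is the assertion $\int_\Om F_s(x)\,dx \to K\int_\Om |\nabla f(x)|^p\,dx$ as $s\to 1-$.

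First I would treat $f\in C^\infty(\Om)\cap W^{1,p}(\Om)$ (more precisely $f\in C^2$, as in \cref{lm-pointwise}). By \cref{lm-pointwise} we have the pointwise convergence $F_s(x)\to K^{p/q}|\nabla f(x)|^p$ for every $x\in\Om$; note the constant in \cref{bbm} will be $K^{p/q}$ with $K$ the \cref{lm-pointwise} constant, or one renames it. The issue is to upgrade this to convergence of the integrals. Here I would invoke \cref{limit-interchange} with $\Om_i = \Om_{\lambda_i}^*$ the smooth bounded exhausting sets from \cref{exhaustion} (for a sequence $\lambda_i\to 0$), $F_n = F_{s_n}$ for any sequence $s_n\to 1-$, and $F = K^{p/q}|\nabla f|^p$. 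Hypothesis (1) of \cref{limit-interchange} (local uniform boundedness of $F_n - F$) follows because on each fixed $\Om_i$ the quantity $\delta_x$ is bounded below by a positive constant and $f\in C^2$, so the estimates in the proof of \cref{lm-pointwise} are uniform on compact subsets. Hypothesis (2) — that $\liminf_i \limsup_n \int_{\Om\setminus\Om_i} F_n\,dx = 0$ — is the crucial tail bound: here I would apply the embedding \cref{embedding-p>q} (case $q\le p$) or \cref{embedding-p<q} (case $p<q$) with $D = \Om\setminus\Om_i \subseteq \{x : \mbox{dist}(x,\partial\Om) < \alpha_i\}$ and $\tilde D$ the slightly enlarged set, obtaining $\int_{\Om\setminus\Om_i} F_s\,dx \le C[f]_{W^{1,p}(\tilde D_i)}^p$ uniformly in $s$; since $|\tilde D_i|\to 0$ and $\nabla f\in L^p(\Om)$, absolute continuity of the integral gives hypothesis (2). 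This is exactly the step the remark flags as the first obstacle, and the reason the enlarged domain $\tilde D$ appears in the embedding lemmas.

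Next I would pass from $C^2$ to general $f\in W^{1,p}(\Om)$. Given $f\in W^{1,p}(\Om)$ and $\eps>0$, I cannot in general extend $f$ to $\RR^N$ (the domain is arbitrary), so instead I work with the exhaustion: on the smooth bounded domain $\Om_\lambda^*$ (\cref{exhaustion}), \cref{stein} gives an extension and hence a $C^\infty$ function $g$ with $\|f-g\|_{W^{1,p}(\Om_\lambda^*)}$ small. The difference $F_s[f] - F_s[g]$ is controlled on $\Om_\lambda^*$ (or on the set $D$ interior to it) by $C[f-g]_{W^{1,p}(\tilde D)}^p$ via the embedding lemmas again, uniformly in $s$, using the elementary inequality $|a^{p/q}-b^{p/q}| \lesssim$ (handled by the quasi-triangle/convexity of the $W^{s,p}_q$-seminorm and then the embedding). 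Combining the smooth case with these two small errors, and then letting $\lambda\to 0$ together with the tail estimate of Stage 1 to account for $\Om\setminus\Om_\lambda^*$, yields \cref{bbm} for all $f\in W^{1,p}(\Om)$.

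The main obstacle, as anticipated in the remark, is the justification of the limit interchange for the tails, i.e. hypothesis (2) of \cref{limit-interchange}: one needs an $s$-uniform bound on $\int_{\Om\setminus\Om_i} F_s$ that vanishes as $i\to\infty$, and the only tool available in an arbitrary (possibly unbounded) domain is the internal-distance-restricted embedding. That is precisely why \cref{embedding-p>q} and \cref{embedding-p<q} are stated with the auxiliary pair $(D,\tilde D)$ where $D$ is a boundary collar intersected with a large ball and $\tilde D$ its enlargement — so that $|\tilde D|\to 0$ as the collar shrinks. A secondary technical point is that in the case $p<q$ one must choose the Sobolev exponent correctly (the conditions $q<\frac{Np}{N-p}$ when $p\le N$, and $p>N$ otherwise) so that \cref{sobolev}, hence \cref{embedding-p<q}, applies; the strict inequality $p<q$ versus $q\le p$ is exactly the dichotomy forcing the two separate embedding lemmas. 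Once these estimates are in hand, the rest is the now-standard BBM bootstrap: pointwise limit on compacta plus dominated-type convergence supplied by \cref{limit-interchange}.
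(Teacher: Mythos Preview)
Your overall approach matches the paper's: pointwise limit from \cref{lm-pointwise}, then \cref{limit-interchange} with the exhaustion $\Om_i$ and the embedding lemmas to supply the tail estimate (hypothesis~(2)), and finally density to pass from $C^2\cap W^{1,p}$ to all of $W^{1,p}$. Two places where you diverge from the paper, however, deserve comment.

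First, your density step is unnecessarily convoluted. You write that ``I cannot in general extend $f$ to $\RR^N$'' and therefore propose to work on $\Om_\lambda^*$ via \cref{stein}, patching things together afterwards. The paper does not do this: it simply uses Meyers--Serrin density of $C^2(\Om)\cap W^{1,p}(\Om)$ in $W^{1,p}(\Om)$ (valid for \emph{any} open $\Om$), chooses $g\in C^2\cap W^{1,p}$ with $[f-g]_{W^{1,p}(\Om)}<\eps$, and then applies \cref{embedding-p>q} or \cref{embedding-p<q} with the choice $D=\tilde D=\Om$ to obtain $(1-s)^{1/q}[f-g]_{\hat W^{s,p}_q(\Om)}\le C[f-g]_{W^{1,p}(\Om)}<C\eps$ uniformly in $s$. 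A plain triangle inequality for the seminorm finishes the step in three lines. No extension, no exhaustion, no separate ``Stage~3'' for unbounded $\Om$ is needed here.

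Second, a small inaccuracy: in verifying hypothesis~(2) you say ``since $|\tilde D_i|\to 0$ and $\nabla f\in L^p(\Om)$, absolute continuity of the integral gives hypothesis~(2)''. For unbounded $\Om$ the sets $\tilde D_i$ can have \emph{infinite} measure for every $i$ (they contain $\{|x|>1/(2\alpha_i)\}$). What you actually need, and what the paper uses, is that $\tilde D_i\downarrow\varnothing$ (or a null set), so $\int_{\tilde D_i}|\nabla f|^p\to 0$ by dominated convergence; absolute continuity via the measure of $\tilde D_i$ is the wrong mechanism here.
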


\begin{proof}[\textbf{Proof of \Cref{Main1}}]
\textbf{Step-1:} We show that it is enough to prove \cref{bbm} for $f\in W^{1,p}(\Om)\cap C^2(\Om)$.\\

 Let $f \in W^{1,p}(\Omega)$ and $\varepsilon>0$ be fixed. Since $C^2(\Om)\cap W^{1,p}(\Omega)$ is dense in $W^{1,p}(\Omega)$, there exists $g \in C^2(\Om)\cap W^{1,p}(\Omega)$ such that
\begin{equation}\label{1}
	[f-g]_{W^{1,p}(\Omega)} < \varepsilon.
\end{equation}
Since we have assumed that \cref{bbm} holds for functions in $C^2(\Om)\cap W^{1,p}(\Omega)$, for $s>\frac{1}{2}$, we have
\begin{equation}\label{2}
	\left| (1-s)^\frac{1}{q}[g]_{s,p,q,\Omega,R}-K^\frac{1}{p}[g]_{W^{1,p}(\Omega)} \right| <\varepsilon.
\end{equation}
Using triangle inequality, and then \cref{2} followed by \cref{1} and either \cref{embedding-p>q} or \cref{embedding-p<q}, we get 
\begin{align*}
	\left| (1-s)^\frac{1}{q}[f]_{s,p,q,\Om,R}-K^\frac{1}{p}[f]_{W^{1,p}(\Omega)} \right| 
	\leq&  (1-s)^\frac{1}{q}\left| [f]_{s,p,q,\Om,R} - [g]_{s,p,q,\Omega,R} \right|\\
	&+ \left| (1-s)^\frac{1}{q}[g]_{s,p,\Omega,R}-K^\frac{1}{q}[g]_{W^{1,p}(\Omega)} \right| + K^\frac{1}{q}\left| [g]_{W^{1,p}(\Omega)}-[f]_{W^{1,p}(\Omega)}\right|\\
	\leq& (1-s)^\frac{1}{q}[f-g]_{s,p,q,\Omega,R} + \varepsilon+ K^\frac{1}{p}[f-g]_{W^{1,p}(\Omega)}\\
	\leq& [f-g]_{W^{1,p}(\Om)} 
	+\varepsilon+ K^\frac{1}{p}[f-g]_{W^{1,p}(\Omega)}\\
	\leq& C(K,q) \varepsilon.
\end{align*}
The proof of step-1 follows.\\ 

\noindent\textbf{Step-2:} \\

In view of the previous step, it is now enough to assume that $f\in C^2(\Om)\cap W^{1,p}(\Omega)$ and prove \cref{bbm}. Let us take an arbitrary sequence $s_n\in(0,1)$ such that $s_n\to1-$ as $n\to\infty$.
Set 
$$F_n(x):=\left((1-s_n)\I{B(0,\delta_x)}\frac{|f(x+h)-f(x)|^p}{|h|^{N+s_np}}dh\right)^\frac{p}{q},
$$
and
$$
F(x):=K|\nabla f(x)|^p.
$$
Also note that, \cref{lm-pointwise} implies that $F_n\to F$ pointwise a.e in $\Om$.
To complete the proof, it is enough to show that 
$$
\lim_{n\to\infty}\I{\Om}F_n(x)dx=\I{\Om}F(x)dx.
$$
We shall apply \cref{limit-interchange} on $F_n$ to show that the interchange of limit and integral is valid.

For any $i\in \NN$, consider the sets $\Om_i:=\{x\in\Om\ |\ \mbox{dist}(x,\partial \Om)>\frac{1}{i}\}\cap B(0,i)$.
We need to verify the the hypotheses of \cref{limit-interchange}. First, note that for $x\in\Om_i$, $h\in B(0,\delta_x)$, $t\in(0,1)$, we have 
$$
\mbox{dist}(x+th,\partial\Om)
> \mbox{dist}(x,\partial\Om) -|h|
\geq (1-\tau)\mbox{dist}(x,\partial\Om).
$$
Thus $x+th\in \Om_{i^2}$ for $i>\frac{1}{(1-\tau)}$. Thus, we have using triangle inequality and then mean value inequality,
\begin{align*}
	|F_{n}(x)-F(x)|
	&=\left| \left((1-s_n)\I{B(0,\delta_x)}\frac{|f(x+h)-f(x)|^q}{|h|^{N+s_nq}}dh\right)^\frac{p}{q}-K|\nabla f(x)|^p \right|\\
	&\leq \sup_{y\in\Om_{i^2}}|\nabla f(y)|^p\left((1-s_n)\I{B(0,R)}\frac{dh}{|h|^{N+s_nq-q}}\right)^\frac{p}{q} +K|\nabla f(x)|^p\\
	&\leq C(N,p,q,R)\sup_{y\in\Om_{i^2}}|\nabla f(y)|^p.
\end{align*}
Since $f$ is continuous in the closure of the bounded open set $\Om_{i^2}$, we have the hypothesis (1) of \cref{limit-interchange} satisfied for sufficiently large $i\in\NN$.

Note that, to show that hypothesis (2) of \cref{limit-interchange} is satisfied, it is enough to show that $$\lim_{i\to\infty}\lim_{n\to\infty}\I{\Om\setminus\Om_{2i}} F_n(x)dx=0.$$ We start with an arbitrary $x\in\Om\setminus\Om_{2i}$, $h\in B(0,\delta_x)$ and $t\in (0,1)$. There can be two cases:\\
\textbf{Case:1 } $\delta_x=\mbox{dist}(x,\partial\Om)<\frac{1}{2i}$.\\
We have $\mbox{dist}(x+th,\partial\Om)\leq |x+th-x|+\mbox{dist}(x,\partial\Om)<\frac{\tau}{2i}+\frac{1}{2i}<\frac{1}{i}$. Thus $x+th\in \Om\setminus \Om_i$.\\
\textbf{Case:2 } 
$|x|>2i$ and $\frac{1}{2i}<\delta_x=R<\mbox{dist}(x,\partial\Om)$. Moreover, we can assume $R<i$ without loss of generality.\\
We have $|x+th|\geq |x|-\tau R \geq 2i-\tau R\geq i$. Thus $x+th\in\Om\setminus B(0,i)\subseteq \Om\setminus\Om_i$.
Hence we always have
\begin{equation}\label{4}
x+th\in\Om\setminus\Om_i \mbox{ whenever }	x\in\Om\setminus\Om_{2i}.
\end{equation}
From \cref{4}, we get
\begin{equation*}
\lim_{i\to\infty}\lim_{n\to\infty}\I{\Om\setminus\Om_{2i}} F_n(x)dx
=\lim_{i\to\infty}\lim_{n\to\infty} \I{x\in\Om\setminus\Om_{2i}}\left((1-s_n)\I{y\in B(x,\delta_x)} \frac{| f(x)-f(y) |^q}{| x-y |^{N+s_nq}} dy\right)^\frac{p}{q}dx.
\end{equation*}
Now we apply \cref{embedding-p>q} or \cref{embedding-p<q} with $D=\Om\setminus\Om_{2i}$ (so that $\tilde{D}=\Om\setminus\Om_{i}$) to get
\begin{equation*}
\lim_{i\to\infty}\lim_{n\to\infty}\I{\Om\setminus\Om_{2i}} F_n(x)dx
\leq \lim_{i\to\infty}\lim_{n\to\infty} C(p,q,R,N)[f]_{W^{1,p}(\Om\setminus\Om_i)}^p
= \lim_{i\to\infty} C(p,q,R,N)[f]_{W^{1,p}(\Om\setminus\Om_i)}^p
=0.
\end{equation*}

Hence we can integrate \cref{pointwise} and interchange the limit and the integral to get the result.
\end{proof}

\begin{theorem}\label{Main2}
	Let $\Om\subset \RR^N$ be an open set. If $f\in L^p(\Om)\cap L^p(\Om)$ is such that 
	$$
	L_{p,q}(f):=\lim_{s\to1-} \I{\Om}\left((1-s)\I{B(x,\delta_x)}\frac{|f(x)-f(y)|^q}{|x-y|^{N+sq}}dy\right)^\frac{p}{q}dx<\infty,
	$$  
	then $f\in W^{1,p}(\Om)$ when $p>1$, and $f\in BV(\Om)$ when $p=1$.
\end{theorem}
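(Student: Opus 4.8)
The plan is to follow the converse half of \cite{BBM}: recover the distributional gradient of $f$ from the hypothesis by a scale--concentration argument, first establishing $f\in W^{1,p}_{loc}(\Om)$ (resp.\ $f\in BV_{loc}(\Om)$ when $p=1$), and then upgrading to a global statement, exploiting that the resulting estimate carries the \emph{global} constant $L_{p,q}(f)$. Set $H_s(x):=(1-s)\int_{B(x,\delta_x)}\frac{|f(x+h)-f(x)|^q}{|h|^{N+sq}}\,dh$, so the hypothesis reads $\int_\Om H_s^{p/q}\,dx\to L_{p,q}(f)$; I fix $L>L_{p,q}(f)$, take $s$ so close to $1$ that $\int_\Om H_s^{p/q}\,dx\le L$, and fix an open set $B_0\Subset\Om$, on which $\delta_x$ is bounded away from $0$ and from $\infty$ --- enough to keep the constants below uniform as $s\to1$. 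Passing to polar coordinates $h=r\nu$ in the inner integral brings out the weight $r^{q(1-s)-1}\,dr$, whose mass stays bounded and whose normalisation on $(0,\delta_0)$ collapses to the point mass at $0$ as $s\to1$. The first difficulty is that, unlike the classical case $q=p$ --- where the seminorm equals $\int\frac{\|\tau_hf-f\|_{L^p}^p}{|h|^{N+sp}}\,dh$ and one simply applies Fubini --- here the $dx$--integral cannot be moved across the $dh$--integral. I would split into two regimes. When $q\ge p$, Jensen's inequality (concavity of $t\mapsto t^{p/q}$) on the probability measure $\propto r^{q(1-s)-1}\,dr\,d\sigma(\nu)$ lets me replace, inside the integral, the $q$--th power of the normalised difference quotient by its $p$--th power, at the cost of a factor $\delta_x^{(1-s)(p-q)}\to1$; Fubini is then legitimate and yields, for $s$ near $1$,
\[
(1-s)\int_{S^{N-1}}\!\int_0^{\delta_0}\Bigl(\tfrac{\|\tau_{r\nu}f-f\|_{L^{p}(B_0)}}{r}\Bigr)^{p}r^{q(1-s)-1}\,dr\,d\sigma(\nu)\le C(N,p,q)\,L .
\]
When $q<p$, I would first pass to $L^q$: since $|B_0|<\infty$, $\int_{B_0}H_s\,dx\le\bigl(\int_{B_0}H_s^{p/q}\,dx\bigr)^{q/p}|B_0|^{1-q/p}$ stays bounded, and the same polar computation (no Jensen needed) gives the display with $p$ replaced by $q$.

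With $\pi:=\min\{p,q\}$, the concentration of $r^{q(1-s)-1}\,dr$ at the origin gives $\liminf_{s\to1}(1-s)\int_0^{\delta_0}g(r)\,r^{q(1-s)-1}\,dr\ge\frac1q\liminf_{r\to0}g(r)$ for $g\ge0$; feeding this and Fatou's lemma on $S^{N-1}$ into the display above produces
\[
\int_{S^{N-1}}\liminf_{r\to0}\Bigl(\tfrac{\|\tau_{r\nu}f-f\|_{L^{\pi}(B_0)}}{r}\Bigr)^{\pi}\,d\sigma(\nu)\le C(N,p,q)\,L ,
\]
so for a.e.\ $\nu\in S^{N-1}$ the difference quotient $\|\tau_{r\nu}f-f\|_{L^{\pi}(B_0)}/r$ has a finite $\liminf$ as $r\to0$. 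The next step is to use that $r\mapsto\|\tau_{r\nu}f-f\|_{L^{\pi}}$ is subadditive (established with slightly enlarged domains to absorb the shift), which promotes a finite $\liminf$ to a finite $\sup$, so that $\partial_\nu f$ is an $L^{\pi}$ function (a finite measure if $\pi=1$) near any such point. Running this over a countable exhaustion of $\Om$ and picking $N$ linearly independent good directions, I get $f\in W^{1,\pi}_{loc}(\Om)$ (resp.\ $BV_{loc}(\Om)$).

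Making the conclusion global splits the same way. If $q\ge p$, then $\pi=p$; since $\nabla f$ now exists locally, $\tfrac1r\bigl(f(\cdot+r\nu)-f\bigr)\to\nabla f\cdot\nu$ a.e.\ as $r\to0$, so $\int_{B_0}|\nabla f\cdot\nu|^{p}\,dx\le\liminf_{r\to0}\|\tau_{r\nu}f-f\|_{L^{p}(B_0)}^{p}/r^{p}$; integrating over $\nu$ and using $\int_{S^{N-1}}|\nabla f(x)\cdot\nu|^{p}\,d\sigma(\nu)=c(N,p)|\nabla f(x)|^{p}$ together with the last display gives $\int_{B_0}|\nabla f|^{p}\,dx\le C(N,p,q)L/c(N,p)$, a bound independent of $B_0\Subset\Om$, whence $\nabla f\in L^{p}(\Om)$ and $f\in W^{1,p}(\Om)$ ($|Df|(\Om)<\infty$, i.e.\ $f\in BV(\Om)$, when $p=1$, using instead $|D_\nu f|(B_0)\le\liminf_{r\to0}\|\tau_{r\nu}f-f\|_{L^{1}(B_0)}/r$ and $\int_{S^{N-1}}|D_\nu f|(B_0)\,d\sigma(\nu)=c(N)|Df|(B_0)$). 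If $q<p$, I have only $f\in W^{1,q}_{loc}(\Om)$ so far; here I would return to the hypothesis and invoke the easy (Fatou) direction of the pointwise BBM limit --- valid a.e.\ for $W^{1,q}_{loc}$ functions, compare \cref{lm-pointwise} --- namely $\liminf_{s\to1}H_s(x)\ge K|\nabla f(x)|^{q}$ a.e., so that $\int_\Om\liminf_{s\to1}H_s^{p/q}\,dx\le\liminf_{s\to1}\int_\Om H_s^{p/q}\,dx=L_{p,q}(f)$ forces $K^{p/q}\int_\Om|\nabla f|^{p}\,dx\le L_{p,q}(f)$, again giving $f\in W^{1,p}(\Om)$. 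Alternatively the last step can be phrased through $|\int_\Om f\,\partial_i\varphi|\le C\|\varphi\|_{L^{p'}(\Om)}$ and \cref{brezis-lemma}, which is handy for $p=1$.

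The hard part is the decoupling forced by $q\neq p$: the Fubini identity that makes the $q=p$ converse nearly automatic is unavailable, so the $dx$--integral must be pushed past the $dh$--integral by interpolating between the $L^{q}(dh)$-- and $L^{p}(dx)$--norms --- by Jensen when $q\ge p$, and by an $L^{q}$ detour followed by the bootstrap through $W^{1,q}_{loc}(\Om)$ when $q<p$ --- without disturbing the weight $r^{q(1-s)-1}$ that drives the $s\to1$ concentration. A further technical layer, as in \cite{DrDu,BaMoRo}, is that $\Om$ is arbitrary and possibly unbounded, so every estimate is first carried out on sets $B_0\Subset\Om$, where $\delta_x$ stays bounded above and below and the constants are uniform, and then patched along an exhaustion $\Om=\bigcup_iB_0^{(i)}$; the limit--integral interchanges needed at that stage are of the type handled by \cref{limit-interchange}.
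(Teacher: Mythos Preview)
Your argument is essentially correct but takes a genuinely different route from the paper. The paper proceeds by duality: in Step~1 it treats a bounded $\Omega$ with the \emph{unrestricted} inner integral $\int_\Omega$ (not $\int_{B(x,\delta_x)}$), estimating $\bigl|\int_\Omega f\,\partial_i\varphi\bigr|$ by splitting into pieces $J_{1,s}$, $J_{2,s}$ and applying two successive H\"older inequalities so as to invoke \cref{brezis-lemma} directly; in Step~2 it exhausts an arbitrary $\Omega$ by smooth bounded $\Omega_n$ (\cref{exhaustion}), checks that the annular piece $\int_{\Omega_n\setminus B(x,\delta_x)}$ is harmless since $f\in L^p\cap L^q$, applies Step~1 on each $\Omega_n$, and then feeds the resulting $f\in X^{1,p}(\Omega_n)$ back into the already--proved direct statement \cref{Main1} to obtain the uniform bound $[f]_{X^{1,p}(\Omega_n)}^p\le K^{-1}L_{p,q}(f)$. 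You instead stay with the restricted seminorm throughout, decouple the $p$-- and $q$--exponents by Jensen (when $q\ge p$) or by an $L^q$ detour on $B_0\Subset\Omega$ (when $q<p$), and recover the gradient from the concentration of $r^{q(1-s)-1}\,dr$ at $r=0$ together with subadditivity of $r\mapsto\|\tau_{r\nu}f-f\|_{L^\pi}$. Your path is more self--contained in the $q\ge p$ regime (it never appeals to \cref{Main1}), while the paper's route is shorter because it reuses machinery already in place. One caveat: your bootstrap for $q<p$ relies on the pointwise lower bound $\liminf_{s\to1}H_s(x)\ge K|\nabla f(x)|^q$ for merely $W^{1,q}_{loc}$ functions, whereas \cref{lm-pointwise} is stated only for $C^2$; the extension via a.e.\ $L^q$--differentiability (Calder\'on--Zygmund) is standard but would have to be supplied separately.
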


\begin{proof}[\textbf{Proof of \Cref{Main2}}]
We divide the proof into two parts. First, we prove it for a particular case with a bit stronger assumptions, and then give the general proof.\\

\textbf{Step-1}: $\Om$ is bounded with $\Om\subseteq B(0,\lambda)$, and 
$$
\tilde{L}_{p,q}(f)
:=\lim_{s\to1-} \I{\Om}\left((1-s)\I{\Om}\frac{|f(x)-f(y)|^q}{|x-y|^{N+sq}}dy\right)^\frac{p}{q}dx
<\infty.
$$

 Extend $f$ by $0$, outside $\Om$. From the proof of Theorem~2 and 3 in \cite{BBM} we can see that for any $i=1,2,\cdots,N$, and $\varphi\in\test(\Om)$,
	\begin{equation}\label{eq-3}
		\left|\I{\Om}f(x)\frac{\partial \varphi(x)}{\partial x_i} dx\right|
		\leq C(\Om,N,p,q)(1-s)(J_{1,s}+J_{2,s}),
	\end{equation}
where, 
$$
J_{1,s}= \I{\Om}\I{\Om}\frac{|f(x)-f(y)|}{|x-y|^{1+N+sq-q}}|\varphi(y)|dydx
$$
and
$$
J_{2,s}=\I{\RR^N\setminus \Om}\I{\mbox{supp}\varphi}\frac{|f(y)||\varphi(y)|}{|x-y|^{1+N+sq-q}}dydx.
$$
We estimate $J_{1,s}$ using Fubuni's theorem to change the order of integration, then using H\"older's inequality twice, first with respect to the measure $\frac{dx}{|x-y|^{N+sq-q}}$ and then with respect to $dy$. We get
\begin{align*}
J_{1,s}
&\leq \I{\Om}\left(\I{\Om}\frac{|f(x)-f(y)|^q}{|x-y|^{q+N+sq-q}}dx\right)^\frac{1}{q} \left(\I{\Om}\frac{|\varphi(y)|^{q'}}{|x-y|^{N+sq-q}}dx\right)^\frac{1}{q'}dy\\
&\leq \left(\I{\Om}\left(\I{\Om}\frac{|f(x)-f(y)|^q}{|x-y|^{N+sq}}dx\right)^\frac{p}{q}dy\right)^\frac{1}{p} \left(\I{\Om}\left(\I{\Om}\frac{|\varphi(y)|^{q'}}{|x-y|^{N+sq-q}}dx\right)^\frac{p'}{q'}dy\right)^\frac{1}{p'}\\
&\leq \left(\I{\Om}\left(\I{\Om}\frac{|f(x)-f(y)|^q}{|x-y|^{N+sq}}dy\right)^\frac{p}{q}dx\right)^\frac{1}{p} \left(\I{\Om}|\varphi(y)|^{p'}\left(\I{B(0,\lambda)}\frac{dx}{|x-y|^{N+sq-q}}\right)^\frac{p'}{q'}dy\right)^\frac{1}{p'}\\
&= C(p,q,N,\lambda)(1-s)^\frac{-1}{q'} \left(\I{\Om}\left(\I{\Om}\frac{|f(x)-f(y)|^q}{|x-y|^{N+sq}}dy\right)^\frac{p}{q}dx\right)^\frac{1}{p} \|\varphi\|_{L^{p'}(\Om)}
\end{align*}
Now using the hypothesis of Step-1, we have
\begin{equation}\label{eq-2}
	(1-s)J_{1,s}
	\leq C(p,q,N,\lambda) \tilde{L}_{p,q}(f) \|\varphi\|_{L^{p'}(\Om)}.
\end{equation}

Using H\"older's inequality, we estimate $J_{2,s}$ as in \cite{BBM} to get

\begin{equation}\label{eq-1}
	(1-s)J_{2,s}
\leq C(N,p,q,\lambda) \|\varphi\|_{L^{p'}(\Om)} \|f\|_{L^{p}(\Om)}
\end{equation}

Using \cref{eq-1,eq-2,eq-3}, we get
\begin{equation*}
	\left|\I{\Om}f(x)\frac{\partial \varphi(x)}{\partial x_i} \right|
	\leq C(\Om,N,p,q,\lambda,f)\|\varphi\|_{L^{p'}(\Om)},
\end{equation*}
Hence by \cref{brezis-lemma}, the result follows.\\

\textbf{Step-2:} We now prove the theorem in full generality.

 For $1< p<\infty$, define $X^{1,p}(\Om):=W^{1,p}(\Om)$, and $X^{1,1}(\Om):=BV(\Om)$. Using \cref{exhaustion}, choose an increasing sequence of bounded open sets $\{\Om_n\}_n$ with smooth boundary such that $\cup_n\Om_n=\Om$, and $\mbox{dist}(x,\partial \Om)>\frac{1}{n}$, for $x\in\Om_n$. From the hypothesis, it follows that
 $$
 \lim_{s\to1-} \I{x\in\Om_n}\left((1-s)\I{y\in\Om_n\cap B(x,\delta_x)}\frac{|f(x)-f(y)|^q}{|x-y|^{N+sq}}dy\right)^\frac{p}{q}dx<\infty.
 $$
 We also have, for $s>\frac{1}{2}$, and $R>\frac{1}{n}$,
 \begin{align*}
 	\I{\Om_n}\left(\I{\Om_n\setminus B(x,\delta_x)}\frac{|f(x)-f(y)|^q}{|x-y|^{N+sq}}dy\right)^\frac{p}{q}dx
 	&\leq\I{\Om_n}\left(\I{\Om_n,\ |x-y|>\frac{1}{n}}\frac{|f(x)-f(y)|^q}{|x-y|^{N+sq}}dy\right)^\frac{p}{q}dx\\
 	&\leq n^{\frac{Np}{q}+sp}\I{\Om_n}\left(\I{\Om_n,\ |x-y|>\frac{1}{n}}|f(x)-f(y)|^qdy\right)^\frac{p}{q}dx\\
 	&\leq C(n,N,p,q) \left[ \|f\|_{L^p(\Om_n)}^p|\Om_n|^\frac{p}{q}
 	+ \|f\|_{L^q(\Om_n)}^p|\Om_n|\right].
 \end{align*}
Since, we have $f\in L^p(\Om)\cap L^p(\Om)$ frfom the hypotheses, and $\Om_n$ are bounded domains, we have
 $$
 \lim_{s\to1-} \I{\Om_n}\left((1-s)\I{\Om_n}\frac{|f(x)-f(y)|^q}{|x-y|^{N+sq}}dy\right)^\frac{p}{q}dx<\infty.
 $$
 From Step-1, we can conclude that $f\in X^{1,p}(\Om_n)$ for all $n$. Further the $X^{1,p}$-seminorms are uniformly bounded (independent of $n$) as can be seen from the following calculation, where we use \cref{Main1},
 \begin{align*}
 	K[f]_{X^{1,p}(\Om_n)}^p
 	&= \lim_{s\to1-} \I{x\in\Om_n}\left((1-s)\I{y\in B(x,\delta_{x,\Om_n})}\frac{|f(x)-f(y)|^p}{|x-y|^{N+sp}}dy\right)^\frac{p}{q}dx\\
 	&\leq \lim_{s\to1-} \I{x\in\Om}\left((1-s)\I{y\in B(x,\delta_x)}\frac{|f(x)-f(y)|^p}{|x-y|^{N+sp}}dy\right)^\frac{p}{q}dx\\
 	&=L_{p,q}(f)<\infty.
 \end{align*}
 The proof follows from the observation that $K[f]_{X^{1,p}(\Om)}^p
 =\sup_{n} K[f]_{X^{1,p}(\Om_n)}^p$.
\end{proof}

\bigskip

\section{Proof of \protect{\Cref{Main1*,Main2*}}}\label{Proof}
Note that \cref{Main2*} is a straightforward consequence of \cref{Main2}, as $L_{p,q}(f)\leq L^*_{p,q}(f)$. \Cref{Main1*} is also a consequence of \cref{Main1}, but it requires a bit more work. To complete the proof of \cref{Main1*}, we only need the following lemma:
\begin{lemma}\label{Equiv-Seminorm-cor}
	Let $\Om\subseteq\RR^N$ be an open set, $1\leq p <\infty$, $\tau\in(0,1)$, $f\in W^{1,p}(\Om)$ and $R>0$. Assume one of the following conditions
	\begin{enumerate}
		\item $1\leq q\leq \frac{Np}{N-p}$ with $p< N$, 
		\item $1\leq q<\infty$ with $p\geq N$.
	\end{enumerate} 
	Then \cref{bbm} implies \cref{bbm*}.
\end{lemma}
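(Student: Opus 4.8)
The plan is to compare the two seminorms directly. Put $\delta_x=\min\{R,\tau\,\mathrm{dist}(x,\partial\Om)\}$ and $\widetilde\delta_x=\tau\,\mathrm{dist}(x,\partial\Om)$; the inner integrals appearing in \cref{bbm} and in \cref{bbm*}, call them $A_s(x)$ and $B_s(x)$, agree except on $D_R:=\{x\in\Om:\tau\,\mathrm{dist}(x,\partial\Om)>R\}$, where the ball in \cref{bbm*} is larger. Write $B_s=A_s+T_s$, where $T_s(x)=\int_{B(x,\widetilde\delta_x)\setminus B(x,\delta_x)}|f(x)-f(y)|^q|x-y|^{-N-sq}\,dy$ is the ``annular tail'' and is supported on $D_R$. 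From $B_s\ge A_s\ge0$ and \cref{bbm} one already gets $\liminf_{s\to1-}(1-s)^{p/q}\int_\Om B_s^{p/q}\ge K\int_\Om|\nabla f|^p$, so only the matching upper bound is at issue. Writing $\theta:=p/q$ and using the elementary inequality $(a+b)^\theta\le\lambda^{1-\theta}a^\theta+(1-\lambda)^{1-\theta}b^\theta$ for $\theta\ge1$ and $\lambda\in(0,1)$ (and letting $\lambda\to1-$ at the end), respectively subadditivity of $t\mapsto t^\theta$ when $\theta\le1$, the whole statement reduces to proving $\lim_{s\to1-}(1-s)^{p/q}\int_{D_R}T_s(x)^{p/q}\,dx=0$.

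The decisive structural point is that on the tail one always has $|x-y|\ge R$, so there the kernel is bounded by $R^{-N-sq}$ and its mass $\int_{|z|>R}|z|^{-N-sq}\,dz=c_N R^{-sq}/(sq)$ stays bounded as $s\to1-$: the cutoff $R$ removes the usual $(1-s)^{-1}$ singularity, so it will be enough to bound $\int_{D_R}T_s^{p/q}\,dx$ by a quantity that does not blow up as $s\to1-$, after which the prefactor $(1-s)^{p/q}$ kills it. When $q\le p$ (so $\theta\ge1$) this is easy: writing $|f(x)-f(y)|^q\le2^{q-1}(|f(x)|^q+|f(y)|^q)$, the $|f(x)|^q$-part contributes $\le C_{N,q,R}\|f\|_{L^p(\Om)}^p$, while the $|f(y)|^q$-part is the $\theta$-th power of the convolution of $|f|^q\mathbf 1_\Om\in L^\theta(\RR^N)$ with the $L^1(\RR^N)$-kernel $|z|^{-N-sq}\mathbf 1_{\{|z|>R\}}$, controlled by Young's convolution inequality — which is available precisely because $\theta\ge1$.

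The case $p<q$ is the main obstacle, and it is here that the Sobolev-range hypotheses (1)--(2) on $q$ are used: $\theta<1$ rules out Young's inequality, so one must instead localise and invoke \cref{sobolev}, whose assumptions are exactly those imposed on $q$. I would cut the tail into dyadic annuli $\mathcal A_k(x)=\{y:R2^k\le|x-y|<\min(R2^{k+1},\widetilde\delta_x)\}$. For every $y$ occurring in $T_s(x)$ one has $|x-y|<\widetilde\delta_x=\tau\,\mathrm{dist}(x,\partial\Om)$, hence the ball $B(x,|x-y|/\tau)$, of radius $\asymp R2^k$, lies inside $\Om$; so on each annulus \cref{sobolev} can be applied on that ball (using absolute continuity on lines of $W^{1,p}$-functions, or Morrey's inequality when $p>N$) to bound $\int_{\mathcal A_k(x)}|f(x)-f(y)|^q\,dy$ by a scaled combination of $\int_{B(x,\cdot)}|\nabla f|^p$ and $\int_{B(x,\cdot)}|f|^p$ over a ball of radius $\asymp R2^k$ contained in $\Om$. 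Counting the kernel together with the Sobolev scaling, the power of $|x-y|$ that emerges is $q(1-s-N/p)$, which is strictly negative once $s>1-N/p$; this geometric gain lets one sum the dyadic series uniformly for $s$ near $1$, raise to the power $\theta<1$, integrate in $x$, and apply Fubini, the overlap being kept under control by the constraint $|x-y|<\tau\,\mathrm{dist}(x,\partial\Om)$ with $\tau<1$ (which, for each fixed $y$, confines the admissible scales $R2^k$ to $k\lesssim\log_+\mathrm{dist}(y,\partial\Om)$). The aim is thus to reach $\int_{D_R}T_s^{p/q}\,dx\le C(N,p,q,R,\tau)\big(\|\nabla f\|_{L^p(\Om)}^p+\|f\|_{L^p(\Om)}^p\big)$, uniformly in $s$.

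The part I expect to be genuinely delicate is precisely this last estimate when $\Om$ is unbounded: a point $x$ deep inside $\Om$ has a long tail, so the dyadic sum is long and the Fubini overlap is large, and one must be sure that the geometric gain coming from the kernel really dominates both the number of scales and the overlap. This is where the strict bound $q<Np/(N-p)$ (equivalently $N(q-p)/pq<1$) and the presence of the cutoff $R$ — the fact that $T_s$ ever only sees $|x-y|\ge R$ — do the decisive work; it is also the reason the argument is organised through the $\hat W^{s,p}_q$-version \cref{Main1} (proved with the embedding lemmas \cref{embedding-p>q} and \cref{embedding-p<q}) rather than attacked directly.
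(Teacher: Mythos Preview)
Your reduction to showing $(1-s)^{p/q}\int_{D_R}T_s^{p/q}\to 0$ is correct, and for $q\le p$ your argument via the pointwise splitting $|f(x)-f(y)|^q\le 2^{q-1}(|f(x)|^q+|f(y)|^q)$ together with Young's convolution inequality works (the paper uses Minkowski's integral inequality here instead, but the two routes are equivalent). The $p<q$ case, however, has a genuine gap that you yourself flag but do not close. After applying the $(q,p)$- or $(q,\beta p)$-Poincar\'e inequality on the dyadic shell at scale $t_k=R2^k$, the gradient contribution does carry the factor $t_k^{q(1-s-N/p)}$ you compute; summing in $k$ this converges, but only to a \emph{constant} bound for $T_s(x)$, useless when $|D_R|=\infty$. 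If instead you keep the local integral $\int_{B(x,Ct_k)}|\nabla f|^p$, pass the $p/q$-th power inside by subadditivity, integrate in $x$ and Fubini, the ball volume $t_k^N$ exactly cancels the $t_k^{-N}$ in your exponent, leaving $\sum_k t_k^{p(1-s)}\|\nabla f\|_{L^p}^p$, which diverges whenever arbitrarily many scales occur (e.g.\ $\Om=\RR^N$ or a half-space). Truncating via $t_k\lesssim\mathrm{dist}(y,\partial\Om)$ only replaces the sum by $\sim(1-s)^{-1}\mathrm{dist}(y,\partial\Om)^{p(1-s)}$, so after the prefactor you are left with $(1-s)^{p/q-1}\to\infty$. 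The strict inequality $q<Np/(N-p)$ does not help here: it governs the availability of Poincar\'e, not the summability of $t_k^{p(1-s)}$.

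The paper takes a completely different and much shorter route for $p<q$. It first isolates the needed finiteness as the hypothesis \eqref{eq7} of a separate \cref{Equiv-Seminorm}, and then verifies \eqref{eq7} by \emph{extension}: choose a smooth domain $\Om_1$ with $\{x\in\Om:\mathrm{dist}(x,\partial\Om)>R\}\subseteq\Om_1\subseteq\Om$, extend $f|_{\Om_1}$ to $\tilde f\in W^{1,p}(\RR^N)$ via Stein (\cref{stein}), and invoke the Triebel--Lizorkin embedding $W^{1,p}(\RR^N)\subseteq F^s_{p,q}(\RR^N)=W^{s,p}_q(\RR^N)$ of \cref{triebel}. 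Since $T_s$ lives only where $\tau\,\mathrm{dist}(x,\partial\Om)>R$, it is dominated by $[\tilde f]_{W^{s,p}_q(\RR^N)}^p<\infty$. The missing idea is thus to reduce to $\RR^N$ and quote the known embedding rather than attempt a direct scale-by-scale estimate on $\Om$.
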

In order to prove this, we first prove a bit more general result.
\begin{proposition}\label{Equiv-Seminorm}
	Let $\Om\subseteq\RR^N$ be an open set, $1\leq p,q<\infty$, $\tau\in(0,1)$, $f\in L^p(\Om)\cap L^q(\Om)$ and $R>0$. Additionally, in the case $p<q$, assume that for some $s_0\in(0,1)$,
	\begin{equation}\label{eq7}
		\I{\Om}\left(\I{R\leq|h|\leq \tau \mbox{dist}(x,\partial \Om)}\frac{|f(x+h)-f(x)|^q}{|h|^{N+s_0q}}dh\right)^\frac{p}{q}dx<\infty.
	\end{equation}
Then \cref{bbm} implies \cref{bbm*}.
\end{proposition}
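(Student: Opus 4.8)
The plan is to split the inner integral in \eqref{bbm*} at the radius $\delta_x=\min\{R,\tau\,\mbox{dist}(x,\partial\Om)\}$. Write
\[
\I{B(x,\tau\,\mbox{dist}(x,\partial\Om))}\frac{|f(x)-f(y)|^q}{|x-y|^{N+sq}}\,dy=A_s(x)+B_s(x),
\]
where $A_s(x)$ is the integral over $B(x,\delta_x)$ — the quantity appearing in \eqref{bbm} — and $B_s(x)$ is the integral over the annulus $\delta_x\le|x-y|\le\tau\,\mbox{dist}(x,\partial\Om)$; note that $B_s(x)\neq 0$ only when $\delta_x=R<\tau\,\mbox{dist}(x,\partial\Om)$. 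The crux of the proof is the claim
\[
\lim_{s\to1-}(1-s)^{\frac{p}{q}}\I{\Om}B_s(x)^{\frac{p}{q}}\,dx=0.
\]
Granting the claim, \eqref{bbm} yields \eqref{bbm*} by elementary inequalities for $t\mapsto t^{p/q}$: when $p\le q$ this map is nondecreasing and subadditive, so
\[
(1-s)^{\frac{p}{q}}\I{\Om}A_s^{\frac{p}{q}}\,dx\le (1-s)^{\frac{p}{q}}\I{\Om}(A_s+B_s)^{\frac{p}{q}}\,dx\le (1-s)^{\frac{p}{q}}\I{\Om}A_s^{\frac{p}{q}}\,dx+(1-s)^{\frac{p}{q}}\I{\Om}B_s^{\frac{p}{q}}\,dx,
\]
and letting $s\to1-$ both ends converge to $K\int_\Om|\nabla f|^p\,dx$ by \eqref{bbm} and the claim; when $q\le p$, $L^{p/q}(\Om)$ is a genuine normed space, and the reverse triangle inequality applied to the nonnegative functions $(1-s)A_s$ and $(1-s)B_s$ gives
\[
\left|\left((1-s)^{\frac{p}{q}}\I{\Om}(A_s+B_s)^{\frac{p}{q}}\,dx\right)^{\frac{q}{p}}-\left((1-s)^{\frac{p}{q}}\I{\Om}A_s^{\frac{p}{q}}\,dx\right)^{\frac{q}{p}}\right|\le \left((1-s)^{\frac{p}{q}}\I{\Om}B_s^{\frac{p}{q}}\,dx\right)^{\frac{q}{p}},
\]
after which one again passes to the limit using \eqref{bbm} and the claim.

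To prove the claim when $p<q$ I would invoke hypothesis \eqref{eq7}. For $|h|\ge R$ and $s_0\le s<1$ one has $|h|^{-(N+sq)}\le C_R\,|h|^{-(N+s_0q)}$ with $C_R:=\max\{1,R^{-(1-s_0)q}\}$ independent of $s$, hence
\[
B_s(x)\le C_R\int_{R\le|h|\le\tau\,\mbox{dist}(x,\partial\Om)}\frac{|f(x+h)-f(x)|^q}{|h|^{N+s_0q}}\,dh,
\]
and the right-hand side has, by \eqref{eq7}, a finite $L^{p/q}(\Om)$-integral independent of $s$; multiplying by $(1-s)^{p/q}\to 0$ gives the claim in this case.

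To prove the claim when $q\le p$ (where \eqref{eq7} is not assumed) I would argue directly. Using $|f(x+h)-f(x)|^q\le 2^{q-1}\bigl(|f(x+h)|^q+|f(x)|^q\bigr)$ I split $B_s\le 2^{q-1}(P_s+Q_s)$, where $P_s(x)=\int_{R\le|h|\le\tau\,\mbox{dist}(x,\partial\Om)}|f(x+h)|^q|h|^{-(N+sq)}\,dh$ and $Q_s(x)=|f(x)|^q\int_{|h|\ge R}|h|^{-(N+sq)}\,dh$. A computation in polar coordinates shows $\int_{|h|\ge R}|h|^{-(N+sq)}\,dh=c_N R^{-sq}/(sq)$, which stays bounded as $s\to1-$; hence the $Q_s$-term contributes at most $(1-s)^{p/q}\bigl(c_NR^{-sq}/(sq)\bigr)^{p/q}\|f\|_{L^p(\Om)}^p\to 0$. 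For $P_s$, note that $|h|\le\tau\,\mbox{dist}(x,\partial\Om)$ forces $x+h\in\Om$, so extending $f$ by $0$ and applying Minkowski's integral inequality with exponent $p/q\ge1$ (and using $q\cdot\tfrac{p}{q}=p$),
\[
\|P_s\|_{L^{p/q}(\Om)}\le\int_{|h|\ge R}|h|^{-(N+sq)}\,\|f(\cdot+h)\|_{L^p(\Om)}^q\,dh\le c_N\,\frac{R^{-sq}}{sq}\,\|f\|_{L^p(\Om)}^q,
\]
whence $(1-s)^{p/q}\int_\Om P_s^{p/q}\,dx\to 0$ as well, completing the claim.

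The step I expect to be the main obstacle is precisely the case $q\le p$ of the claim: when $\Om$ is unbounded the set $\{x\in\Om:\tau\,\mbox{dist}(x,\partial\Om)>R\}$ may have infinite measure, so the crude pointwise bound $B_s(x)\le c_N R^{-sq}\|f\|_{L^q(\Om)}^q/(sq)$ is useless after integrating in $x$, and one genuinely needs the Minkowski-integral-inequality argument above; this is also the reason this regime must be separated from the case $p<q$, where instead one leans on the extra hypothesis \eqref{eq7}. The remaining manipulations — the subadditivity and reverse triangle inequality steps and the final appeal to \eqref{bbm} — are routine.
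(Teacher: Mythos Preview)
Your proof is correct and follows essentially the same strategy as the paper: split the inner integral at $\delta_x$, show the annulus contribution $I_2=\int_\Om B_s^{p/q}\,dx$ stays bounded in $s$ (so that $(1-s)^{p/q}I_2\to 0$), and conclude via \eqref{bbm}. For the tail bound you and the paper both use Minkowski's integral inequality when $q\le p$ and the hypothesis \eqref{eq7} when $p<q$.

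Two small points of comparison. First, for the recombination step the paper invokes the triangle inequality for the mixed $L^p_x(L^q_y)$ norm once, which gives
\[
\Bigl(\I{\Om}(A_s+B_s)^{p/q}\,dx\Bigr)^{1/p}\le\Bigl(\I{\Om}A_s^{p/q}\,dx\Bigr)^{1/p}+\Bigl(\I{\Om}B_s^{p/q}\,dx\Bigr)^{1/p}
\]
uniformly for all $1\le p,q<\infty$; this avoids your case split between subadditivity of $t\mapsto t^{p/q}$ and the $L^{p/q}$ reverse triangle inequality, though your two-case argument is of course also valid. Second, your treatment of the tail in the range $p<q$ is actually cleaner than the paper's: you simply observe that $B_s(x)\le C_R\,B_{s_0}(x)$ for $s\ge s_0$ and appeal directly to \eqref{eq7}, whereas the paper introduces an auxiliary radius $\lambda_f$ and a H\"older step before reducing to an $L^q$ bound.
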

\begin{proof}[\textbf{Proof of \Cref{Equiv-Seminorm}}]
Note that, since 
$$
\I{\Om}\left(\I{B(x,\delta_x)}\frac{|f(x)-f(y)|^q}{|x-y|^{N+sq}}dy\right)^\frac{p}{q}dx
\leq 
\I{\Om}\left(\I{B(x,\tau\mbox{dist}(x,\partial \Om))}\frac{|f(x)-f(y)|^q}{|x-y|^{N+sq}}dy\right)^\frac{p}{q}dx,
$$
from \cref{bbm} we have
$$
\lim_{s\to1-} (1-s)^\frac{p}{q}\I{\Om}\left(\I{B(x,\tau\mbox{dist}(x,\partial \Om))}\frac{|f(x)-f(y)|^q}{|x-y|^{N+sq}}dy\right)^\frac{p}{q}dx
\geq K\I{\Om}|\nabla f(x)|^pdx.
$$
We focus on the reverse inequality. Observe that for $x,y\in\Om$, $\delta_x< |x-y|\leq \tau \mbox{dist}(x,\partial \Om)$ implies $R< |x-y|\leq \tau \mbox{dist}(x,\partial \Om)$. Hence we can write, using triangle inequality for $L^p$-norms,
\begin{align*}
&\left(\I{\Om}\left(\I{B(x,\tau \mbox{dist}(x,\partial \Om))}\frac{|f(x)-f(y)|^q}{|x-y|^{N+sq}}dy\right)^\frac{p}{q}dx\right)^\frac{1}{p} \\
	&= \left(\I{\Om}\left(\I{B(x,\delta_x)}\frac{|f(x)-f(y)|^q}{|x-y|^{N+sq}}dy
	+ \I{\delta_x\leq |x-y|\leq \tau \mbox{dist}(x,\partial \Om)}\frac{|f(x)-f(y)|^q}{|x-y|^{N+sq}}dy\right)^\frac{p}{q}dx\right)^\frac{1}{p} \\
	&\leq \left(\I{\Om}\left(\I{B(x,\delta_x)}\frac{|f(x)-f(y)|^q}{|x-y|^{N+sq}}dy\right)^\frac{p}{q}dx\right)^\frac{1}{p}
	+ \left(\I{\Om}\left(\I{R\leq |x-y|\leq \tau \mbox{dist}(x,\partial \Om)}\frac{|f(x)-f(y)|^q}{|x-y|^{N+sq}}dy\right)^\frac{p}{q}dx\right)^\frac{1}{p}\\
	&=:I_1^\frac{1}{p}+I_2^\frac{1}{p}.
\end{align*}
In order to complete the proof, in view of \cref{bbm}, we need to show that $I_2$ is bounded as $s\to 1-$.
We estimate $I_2$ in two separate cases.\\

\textbf{Case-1:} $1\leq q\leq p<\infty$.\\
Using Minkowsky's integral inequality and taking the $0$-extension of $f$ outside $\Om$, we have
\begin{align*}
	I_2^\frac{q}{p}
	&\leq 
	\left(\I{\Om}\left(\I{R\leq|h|\leq \tau \mbox{dist}(x,\partial \Om)}\frac{|f(x+h)-f(x)|^q}{|h|^{N+sq}}dh\right)^\frac{p}{q}dx\right)^\frac{q}{p}\\
	&\leq
	\I{\RR^N}\left(\I{\Om}\frac{|f(x+h)-f(x)|^p}{|h|^{\frac{Np}{q}+sp}}\chi_{B(0,\tau \mbox{dist}(x,\partial \Om))\setminus B(0,R)}(h)dx\right)^\frac{q}{p}dh\\
	&\leq
	\I{|h|\geq R}\frac{1}{|h|^{N+sq}} \left(\I{\Om}|f(x+h)-f(x)|^pdx\right)^\frac{q}{p}dh\\
	&\leq C(p,q) \I{|h|\geq R}\frac{1}{|h|^{N+sq}} \left(\I{\Om}|f(x)|^pdx\right)^\frac{q}{p}dh\\
	&\leq C(p,q,R,N) \|f\|_{L^p(\Om)}^q.
\end{align*}
Hence the proof follows in this case.\\

\textbf{Case-2:} $1\leq p\leq q<\infty$.\\
From \cref{eq7} we get that there is some $\lambda_f>0$ such that for $s\in (s_0,1)$,
\begin{align*}
	I_2
	&\leq \I{\Om}\left(\I{R\leq|h|\leq \tau \mbox{dist}(x,\partial \Om)}\frac{|f(x+h)-f(x)|^q}{|h|^{N+s_0q}}dh\right)^\frac{p}{q}dx\\
	&\leq 2\I{\Om\cap B(0,\lambda_f)}\left(\I{R\leq|h|\leq \tau \mbox{dist}(x,\partial \Om)}\frac{|f(x+h)-f(x)|^q}{|h|^{N+s_0q}}dh\right)^\frac{p}{q}dx.
\end{align*}
Using H\"older's inequality, we get
\begin{equation*}
	I_2
	\leq 2\lambda_f^{N(1-\frac{p}{q})}\left(\I{\Om\cap B(0,\lambda_f)}\I{R\leq|h|\leq \tau \mbox{dist}(x,\partial \Om)}\frac{|f(x+h)-f(x)|^q}{|h|^{N+s_0q}}dhdx\right)^\frac{p}{q}.
\end{equation*}
Now we can proceed as in Case-1 to show that $$I_2 \leq C(N,p,q,R,f) \|f\|_{L^q(\Om)}^p.$$
This completes the proof.
\end{proof}

Now we can prove \cref{Equiv-Seminorm-cor} and thereby complete the proof of \cref{Main1*}. 
\begin{proof}[\textbf{Proof of \cref{Equiv-Seminorm-cor}}]
By the standard embedding theorems, we already know that $f\in L^q(\Om)$. In order to prove the statement, we need to show that when $p<q$, \cref{eq7} holds. Let $\Om_1$ be a smooth domain such that 
$$
\{x\in\Om\ |\ \mbox{dist}(x,\partial\Om)>R\}
\subseteq \Om_1\subseteq \Om.
$$
Clearly $\Om_1$ is a $W^{1,p}$-extension domain (by \cref{stein}). Let $\tilde{f}\in W^{1,p}(\RR^N)$ be an extension of $f|_{\Om_1}$, that is 
$$
\|\tilde{f}\|_{W^{1,p}(\RR^N)}
\leq C(p,q,N,\Om) \|f\|_{W^{1,p}(\Om_1)}
\leq C(p,q,N,\Om) \|f\|_{W^{1,p}(\Om)}<\infty.
$$ 
This, along with \cref{triebel}
\begin{align*}
	\I{\Om}\left(\I{R\leq|h|\leq \tau \mbox{dist}(x,\partial \Om)}\frac{|f(x+h)-f(x)|^q}{|h|^{N+sq}}dh\right)^\frac{p}{q}dx
	&= \I{\Om_1}\left(\I{R\leq|h|\leq \tau \mbox{dist}(x,\partial \Om)}\frac{|f(x+h)-f(x)|^q}{|h|^{N+sq}}dh\right)^\frac{p}{q}dx\\
	&\leq \I{\RR^N}\left(\I{\RR^N}\frac{|\tilde{f}(x+h)-\tilde{f}(x)|^q}{|h|^{N+sq}}dh\right)^\frac{p}{q}dx\\
	&<\infty .
\end{align*}
\end{proof}

\bigskip

\section*{Acknowledgement} I extend my heartfelt gratitude to Antti V\"ah\"akangas for his invaluable discussions, meticulous review of the manuscript, and unwavering support throughout this project. I want to thank Pekka Koskela for the discussions we had during the manuscript preparation. Additionally, my appreciation goes to Emiel Lorist for pointing out an error in the initial version of the manuscript, and to the anonymous reviewer for his/her valuable comments and suggestions.\bigskip

\section*{Funding} The research is funded by Academy of Finland grant: Geometrinen Analyysi(21000046081).\bigskip

 \bibliography{bibliography.bib}
\bibliographystyle{plain}
\end{document}